\newlength{\drop}
\definecolor{amethyst}{rgb}{0.6, 0.4, 0.8}
\definecolor{burgundy}{rgb}{0.5, 0.0, 0.13}
\newtheoremstyle{remboldstyle}
  {}{}{}{}{\bfseries}{.}{.5em}{{\thmname{#1 }}{\thmnumber{#2}}{\thmnote{ (#3)}}}
\theoremstyle{remboldstyle}
\newtheorem{theorem}{Theorem}[section]
\newtheorem{remark}{Remark}[section]
\title{\textbf{Effect of temperature-dependent material properties on thermal regulation in microvascular composites}}
\author{\textbf{K.~Adhikari$^{1}$}, \textbf{J.~F.~Patrick$^{2}$}, and \textbf{K.~B.~Nakshatrala$^{1}$} \\
  {\small 
  $^{1}$ Department of Civil and Environmental Engineering \\
  University of Houston, Houston, TX 77204, USA. \\
  $^{2}$ Department of Civil, Construction, and Environmental Engineering \\ 
  North Carolina State University, 
  Raleigh, NC 27695, USA. 
  } \\
  \textbf{Correspondence to:} knakshatrala@uh.edu, 
  +1-713-743-4418
  }
\keywords{temperature-dependent thermophysical properties; 
thermal regulation; 
maximum principles; flow reversal; 
microvascular structural materials; 
fiber-reinforced composites}
\begin{document}

\begin{titlepage}
  \drop=0.1\textheight
  \centering
  \vspace*{0.1\baselineskip}
  \rule{\textwidth}{1.6pt}\vspace*{-\baselineskip}\vspace*{1pt}
  \rule{\textwidth}{0.4pt}\\[\baselineskip]
       {\Large \textbf{\color{burgundy}
       Effect of temperature-dependent material properties\\[0.1\baselineskip]on thermal regulation in microvascular composites}}\\[0.1\baselineskip]
       \rule{\textwidth}{0.4pt}\vspace*{-\baselineskip}\vspace{1pt}
       \rule{\textwidth}{1.6pt}\\[0.1\baselineskip]
       \scshape
       An e-print of this paper is available on arXiv. \par
       \vspace*{0.1\baselineskip}
       Authored by \\[0.1\baselineskip]

{\Large K.~Adhikari\par}
  {\itshape Graduate Student, Department of Civil \& Environmental Engineering \\
  University of Houston, Houston, Texas 77204.}\\[0.1\baselineskip]

  {\Large J.~F.~Patrick\par}
  {\itshape Assistant Professor, Department of Civil, Construction, \& Environmental Engineering \\
  North Carolina State University, Raleigh, North Carolina 27695.}\\[0.1\baselineskip]
  
  {\Large K.~B.~Nakshatrala\par}
  {\itshape Department of Civil \& Environmental Engineering \\
  University of Houston, Houston, Texas 77204. \\
  \textbf{phone:} +1-713-743-4418, \textbf{e-mail:} knakshatrala@uh.edu \\
  \textbf{website:} http://www.cive.uh.edu/faculty/nakshatrala}\\[0.0\baselineskip]
\begin{figure*}[ht]
    \centering
    \includegraphics[scale=0.4]{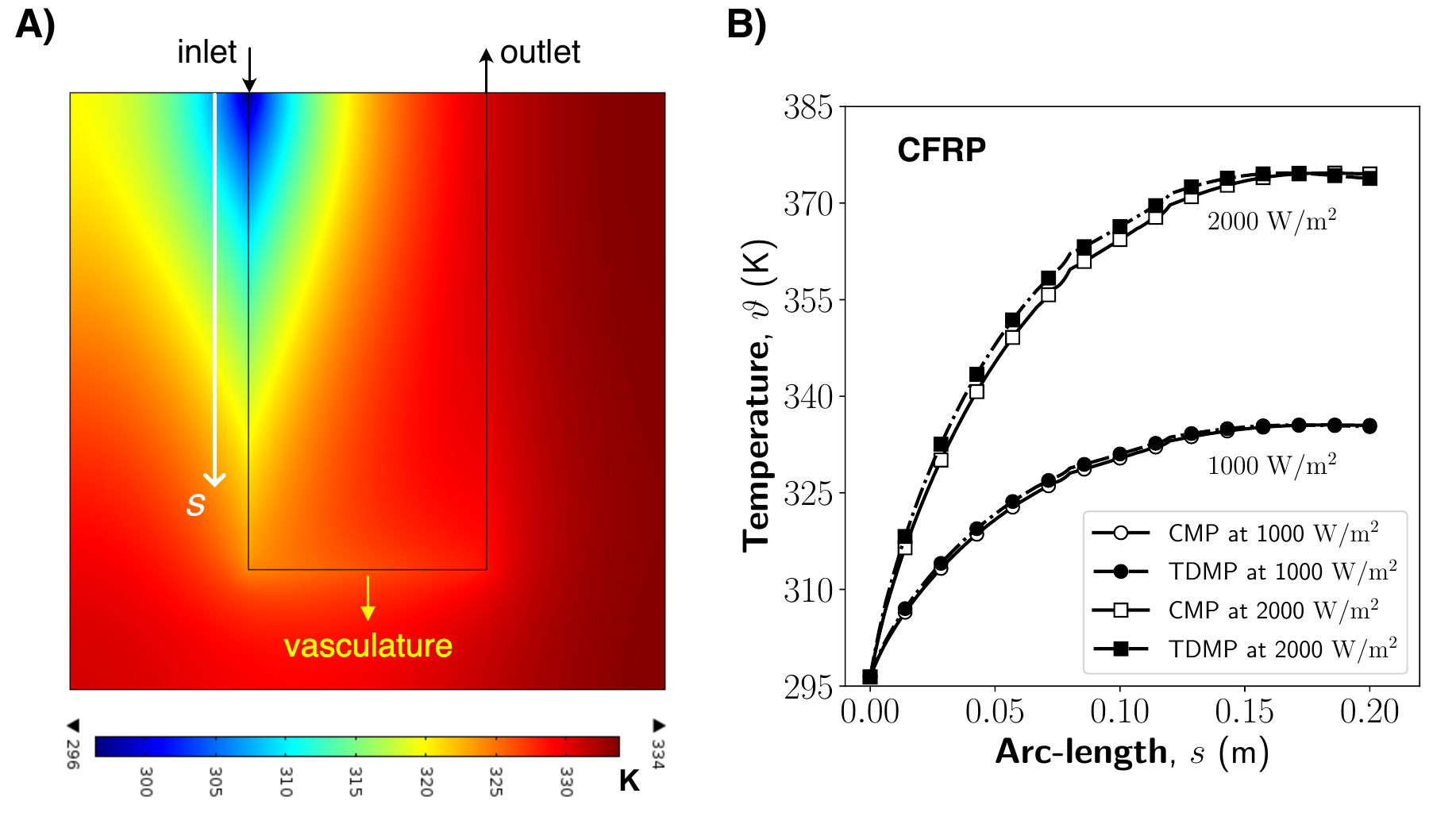}
    \vspace{-0.05in}   
    \captionsetup{labelformat=empty}
    \caption{This figure depicts \textbf{A)} the temperature field in the entire domain and \textbf{B)} the variation of the temperature ($\vartheta$) along the arc-length ($s$) of the vasculature. The left image shows the U-shaped vasculature in a carbon-fiber-reinforced polymer (CFRP) host whose thermophysical properties depend on the temperature. \emph{Inference:} For two different applied heat fluxes, $1000$ and $2000 \; \mathrm{W/m^2}$, numerical results reveal that the temperature-dependent material properties (TDMP) did not significantly influence the solution field compared to assumed constant material properties (CMP).}
\end{figure*}
\vspace{-0.1in}
  {\scshape 2024} \\
  {\small Computational \& Applied Mechanics Laboratory} \par
\end{titlepage}

\begin{abstract}
Fiber-reinforced composites (FRC) provide structural systems with unique features that appeal to various civilian and military sectors. Often, one needs to modulate the temperature field to achieve the intended functionalities (e.g., self-healing) in these lightweight structures. Vascular-based active cooling offers one efficient way of thermal regulation in such material systems. However, the thermophysical properties (e.g., thermal conductivity, specific heat capacity) of FRC and their base constituents depend on temperature, and such structures are often subject to a broad spectrum of temperatures. Notably, prior active cooling modeling studies did not account for such temperature dependence. Thus, the primary aim of this paper is to reveal the effect of temperature-dependent material properties---obtained via material characterization---on the qualitative and quantitative behaviors of active cooling. By applying mathematical analysis and conducting numerical simulations, we show this dependence does not affect qualitative attributes, such as minimum and maximum principles (in the same spirit as \textsc{Eberhard Hopf}'s results for elliptic partial differential equations). However, the dependence slightly affects quantitative results, such as the mean surface temperature and thermal efficiency. The import of our study is that it provides a deeper understanding of thermal regulation systems under practical scenarios and can guide researchers and practitioners in perfecting associated designs. 
\end{abstract}

\maketitle

\vspace{-0.3in}

\setcounter{figure}{0}  

\section*{A LIST OF ABBREVIATIONS}
\setcounter{table}{-1}
\begin{longtable}{ll}\hline
    \hline \multicolumn{2}{c}{\emph{Abbreviations}} \\ \hline
  \textsf{CFRP} & carbon-fiber-reinforced polymer \\ 
  \textsf{CMP} & constant material properties \\  
  \textsf{FRC} & fiber-reinforced composites \\  
  \textsf{GFRP} & glass-fiber-reinforced polymer \\ 
  \textsf{IBVP} & initial boundary value problem\\
  \textsf{MST} & mean surface temperature \\
  \textsf{ROM} & reduced-order model \\
  \textsf{TDMP} & temperature-dependent material properties \\
  \hline
\end{longtable}

\setcounter{table}{0}

\newpage 

\section{INTRODUCTION AND MOTIVATION}
\label{Sec:S1_TProp_Intro}

\lettrine[findent=2pt]{\fbox{\textbf{T}}}{hermophysical properties of materials often depend on temperature}. Therefore, understanding how materials respond to changes in temperature is crucial for designing efficient systems, predicting material performance, and ensuring the safety and reliability of processes and products. This dependence is particularly conspicuous for fiber-reinforced composites (FRC) and their base constituents \citep{yu2007modeling}. FRCs consist of two main components: a continuous matrix, often composed of polymers or resins, which contributes to structural integrity and cohesion, and fiber reinforcements, typically made of various materials such as glass or carbon, which provide stiffness and strength \citep{hyer2009stress,ashbee1993fundamental}. Two properties pertinent to \emph{thermal regulation in micro-vascular composites}---the main focus of this paper---are the specific heat capacity and thermal conductivity. 

Specific heat capacity is the amount of energy required---supplied as heat---to raise the temperature of one kilogram of the material by one Kelvin, and this needed energy can vary with temperature \citep{kittel1998thermal}. On the other hand, thermal conductivity measures a substance's propensity to transfer heat; it is the rate at which thermal energy flows through a unit area for a unit temperature difference across the material \citep{cengel2011thermodynamics}. This propensity---consequently the thermal conductivity---can also vary with temperature  \citep{tritt2005thermal}. The underlying physical mechanism is that a temperature change alters either the kinetic or potential energy of the atoms within the material, subsequently leading to a change in material properties \citep{grimvall1999thermophysical,buck2011thermal}. Studies conducted on a range of materials---including ceramics \citep{de2012review}, VO$_2$ \citep{ordonez2018modeling}, coal and rocks \citep{wen2015temperature}, and polymers \citep{godovsky2012thermophysical}---have reported changes in material properties (such as thermal conductivity, diffusivity, and specific heat capacity) due to change in temperature. As a result, parameter estimation techniques based on inverse methods have been developed to estimate temperature-dependent thermal properties \citep{cui2012new,huang1995inverse}.

This paper primarily focuses on understanding the effect of temperature-dependent thermophysical properties on thermal regulation in microvascular polymer-matrix composites. To understand the importance of our work, it is worth looking at how thermal regulation is essential for optimal performance and long-term sustainability in numerous synthetic and natural systems. Thermal regulation is inherent to mammals and birds, enabling them to adapt to temperature variations \citep{wissler2018animal}. The legs of birds serve as controlled heat conduits that play an important role in thermoregulation \citep{steen1965importance, kahl1963thermoregulation}. Thermal regulation based on fluid flow in an embedded vasculature (i.e., active cooling) is also central to how jackrabbits adapt to desert climate \citep{hill1976jackrabbit} and how the human body maintains homeostasis \citep{gonzalez2012human}, to name a few. 

In numerous contemporary technologies, the combination of compact dimensions and enhanced operational capabilities constitutes a major challenge in addressing device overheating \citep{singh2006thermal}. For instance, state-of-the-art high-energy-density lithium-ion batteries produce more heat than conventional batteries, and the performance of these batteries and the structural integrity diminish at higher temperatures \citep{nitta2015li,bandhauer2011critical}. Thus, to make these emerging technologies successful, designers integrate thermal regulation systems into product development to achieve high efficiency while minimizing energy consumption. Several possible innovative techniques exist for thermal regulation in synthetic materials, for example, thermal control coatings, thermal metamaterials, thermal switches, and reciprocating airflow \citep{kiomarsipour2013improvement,sklan2018thermal,yang2019integrated,mahamud2011reciprocating}. 

However, bio-inspired thermal regulation offers attractive and cost-effective solutions for synthetic material systems \citep{swanson2003nasa,Nakshatrala_PNAS_Nexus_2023,devi2021microvascular}. This approach entails the circulation of a coolant through a network to control temperature, emulating the circulatory system found in living organisms. Over the past few decades, researchers have made significant advancements in bringing fluid-induced vascular-based thermal regulation to numerous applications: nano-satellites \citep{tan2018computational}, batteries \citep{pety2017carbon}, microelectronics \citep{spencer2007air}, and space probes \citep{driesman2019journey}. Within these vascular-based systems, a network of micro-channels is incorporated into the synthetic material utilizing the latest progress in additive manufacturing techniques \citep{patrick2017robust,nguyen2018recent,favero2021additive,pathak4529173two}. The flow of a coolant through vasculature has a dual effect on the temperature of the surrounding material. Firstly, it extracts heat directly from the thermally loaded material \citep{oueslati2008pcb,pastukhov2003miniature}. Secondly, it facilitates the redistribution of heat between hotter and colder regions within the domain \citep{benfeldt1999vivo}. 

Given the complexity of thermal regulation, modeling plays a crucial role, and duly, various modeling approaches have been previously employed \citep{jagtap2023coolpinns,nakshatrala2023ROM}. However, prior modeling efforts on vascular-based thermal regulation did not account for temperature-dependent material properties. Three prior studies that are closely connected to this paper need to be addressed when introducing temperature-dependent material properties, as they previously assumed constant values sampled at ambient temperature.

First, \citet{nakshatrala2023ROM} has presented a mathematical model to describe the steady-state response of a vascular-based thermal regulation system; the cited paper has also established qualitative mathematical properties---such as the minimum and maximum principles---that the steady-state solutions satisfy. Thus, a mathematical-related question is:  
\begin{enumerate}[(Q1)]
    \item Do the minimum and maximum principles hold for thermal regulation even under temperature-dependent material properties?  
\end{enumerate}

Second, \citet{Nakshatrala_PNAS_Nexus_2023} have recently shown that the mean surface temperature (MST) and the outlet temperature remain invariant under flow reversal (i.e., swapping the locations of the inlet and outlet), assuming the material properties were independent of temperature. Duly, a natural question to ask is: 
\begin{enumerate}[(Q2)]
    \item How do temperature-dependent material properties affect the two invariants (i.e., MST and outlet temperature) under flow reversal? 
\end{enumerate}

Third, \citet{nakshatrala2023thermal} have performed an adjoint-state-based sensitivity analysis to unravel the non-monotonic behavior of the host material's thermal conductivity to the MST without considering the temperature-dependent material properties. Thus, besides studying qualitative properties, it is also necessary to undertake a quantitative investigation. The third and final question is: 
\begin{enumerate}[(Q3)]
    \item How does accounting for temperature-dependent material properties affect the active cooling performance quantitatively?
\end{enumerate}

We answer these three posed questions in subsequent sections of this paper. We use mathematical analysis and numerical simulations to discern the effect of temperature-dependent material properties on active cooling performance. The significance of our work is that it provides a deeper understanding of how vascular-based thermal regulation systems behave in realistic scenarios.  

The layout for the rest of this paper is as follows. We introduce a mathematical model that describes thermal regulation in thin vascular systems, considering temperature-dependent material properties (\S\ref{Sec:S2_TProp_ROM}). Next, we present thermal characterization results, showing how the specific heat capacity and thermal conductivity vary with temperature for CFRP, GFRP, and epoxy (\S\ref{Sec:S3_TProp_MatProperties}). Subsequently, we establish minimum and maximum principles for steady-state solutions, extending the previously known results for constant material properties to the case of temperature-dependent material properties---answering the first question (\S\ref{Sec:S4_TProp_MP}). Using numerical simulations, we then address the last two questions (\S\ref{Sec:S5_TProp_NR}--\S\ref{Sec:S7_TProp_Quantitative}). Finally, we draw concluding remarks and propose potential future work (\S\ref{Sec:S8_TProp_Closure}).

\section{TEMPERATURE-DEPENDENT REDUCED-ORDER MODEL}
\label{Sec:S2_TProp_ROM}

\begin{figure}[h]
    \centering
    \includegraphics[scale=0.5]{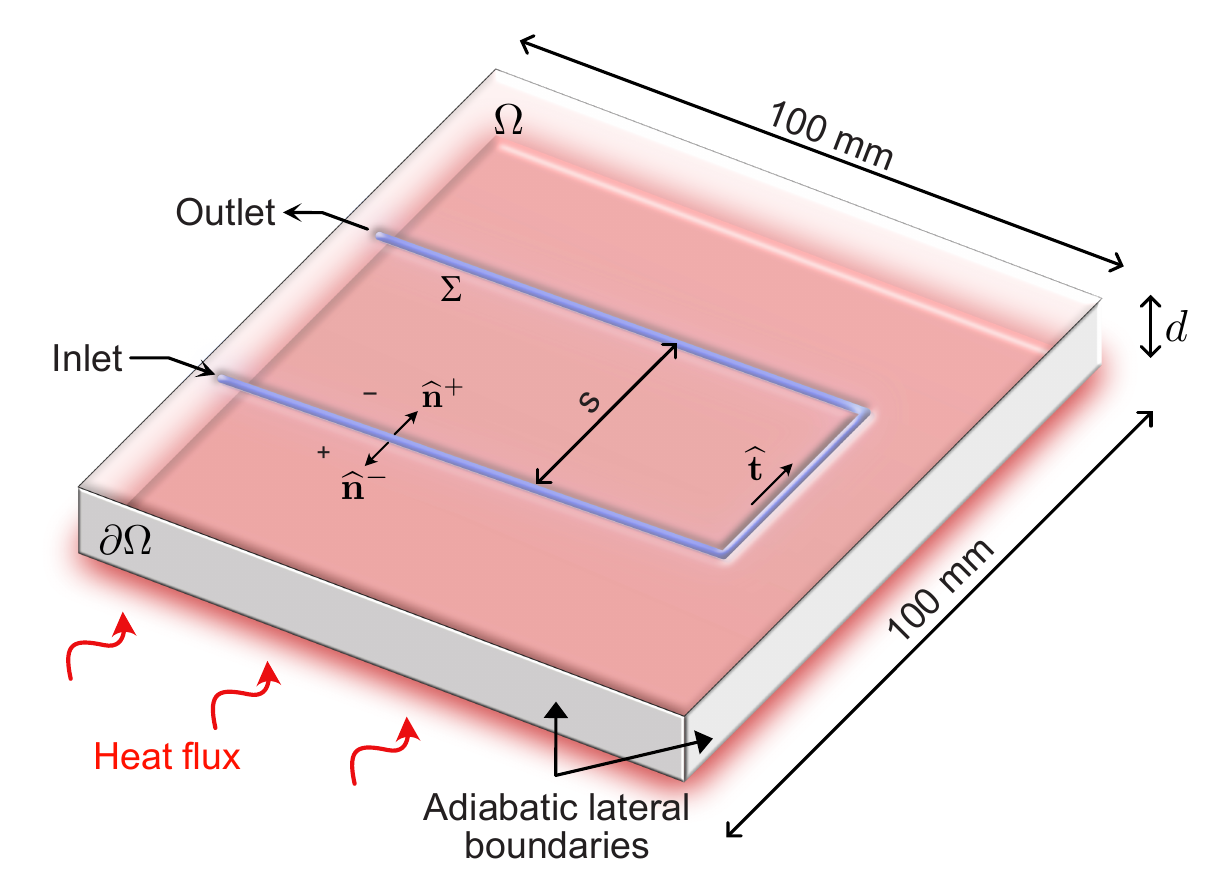}
    \caption{\textsf{Problem setup:} The figure illustrates a thin domain $\Omega$ of area 100 mm x 100 mm and thickness $d$. The lateral boundaries $\partial\Omega$ are adiabatic. A U-shaped vasculature $\Sigma$ with spacing $s$ is embedded with an inlet and outlet through which coolant flows. $\widehat{\mathbf{n}}^{\pm}(\mathbf{x})$ denote the unit normals on either side of $\Sigma$ and the tangent vector along the vasculature is denoted by $\widehat{\mathbf{t}}(\mathbf{x})$. A constant heat flux is applied on the bottom face of the domain, and the top surface is free to radiate and convect.}
    \label{fig1:TProp_Problem_setup}
\end{figure}

Consider a slender body with thickness $d$, as shown in \textbf{Fig.~ \ref{fig1:TProp_Problem_setup}}. The body contains an embedded vasculature---a network of channels that carry coolant to modulate the temperature field. We treat the embedded vasculature as a curve, meaning that we do not resolve the flow characteristics across the cross-section of the vasculature. The slenderness assumption, along with treating the vasculature as a curve, renders a complete three-dimensional analysis unnecessary for modeling thermal regulation in thin vascular systems. Rather a two-dimensional description will be adequate. Accordingly, we utilize the mathematical model proposed by \citet{nakshatrala2023ROM} and modify it to incorporate temperature-dependent material properties. Henceforth, we refer to the resulting mathematical description as the reduced-order model (ROM).

Notationally, we denote the two-dimensional mid-surface---referred to as the domain---$\Omega$, and the domain's external boundary $\partial \Omega:= \overline{\Omega}\setminus\Omega$, where an overline denotes the set closure. We use $\mathbf{x} \in \overline{\Omega}$ to depict a spatial point, and $t \in [0,\mathcal{T}]$ is the time, with $\mathcal{T}$ denoting the length of the time interval of interest. $\partial(\cdot)/\partial t$ stands for the partial derivative in time, and $\mathrm{grad}[\cdot]$ and $\mathrm{div}[\cdot]$ represent the spatial gradient and divergence operators, respectively. $\vartheta(\mathbf{x},t)$ and $\mathbf{q}(\mathbf{x},t)$ are, respectively, the temperature and heat flux vector fields defined at a spatial point $\mathbf{x}$ and at time instance $t$.

We assume the external boundary ($\partial \Omega$) is piece-wise smooth with the unit outward normal vector $\widehat{\mathbf{n}}(\mathbf{x})$ defined everywhere on $\partial \Omega$ except at corners. Regarding boundary conditions, the external boundary is decomposed into complementary parts: $\Gamma^{\vartheta}$ and $\Gamma^{q}$. $\Gamma^\vartheta$ denotes the part of the boundary on which the Dirichlet boundary condition is enforced (i.e., temperature is prescribed), and $\Gamma^{q}$ is that part of the boundary on which the Neumann boundary condition is enforced (i.e., heat flux is prescribed). For  mathematical well-posedness, we require that $\Gamma^\vartheta \cup \Gamma^{q} = \partial \Omega$ and $\Gamma^\vartheta \cap \Gamma^{q} = \emptyset$. 

We denote the curve representing the vasculature by $\Sigma$ with $\widehat{\mathbf{t}}(\mathbf{x})$ denoting the unit tangent vector along the vasculature; see \citep[SI]{Nakshatrala_PNAS_Nexus_2023} for a discussion on how to mathematically define $\widehat{\mathbf{t}}(\mathbf{x})$. We parameterize $\Sigma$ using the arc-length $s$ starting from the inlet; patently, $s=0$ corresponds to the inlet. To achieve active cooling, a fluid flows through the vasculature where $\rho_f$ and $c_f$ denote the density and specific heat capacity of this fluid, respectively. The vasculature's inlet and outlet are on the lateral sides of the domain. $\vartheta_{\mathrm{inlet}}$ denotes the prescribed temperature at the inlet, while the outlet temperature is unknown \emph{a priori} and is part of the solution of the mathematical model. We utilize the jump operator to write the balance laws across the vasculature (i.e., jump conditions). Given a scalar field $\alpha(\mathbf{x},t)$ and a vector field $\mathbf{a}(\mathbf{x},t)$, the jump operator is defined as follows: 
\begin{subequations}
    \label{Eqn:TProp_jump_operator}
    \begin{align}
    &\llbracket \alpha(\mathbf{x},t)\rrbracket = \alpha^{+}(\mathbf{x},t) \, \widehat{\mathbf{n}}^{+}(\mathbf{x}) 
    + \alpha^{-}(\mathbf{x},t) \, \widehat{\mathbf{n}}^{-}(\mathbf{x}) \\
    &\llbracket \mathbf{a}(\mathbf{x},t)\rrbracket = \mathbf{a}^{+}(\mathbf{x},t) \bullet \widehat{\mathbf{n}}^{+}(\mathbf{x}) 
    + \mathbf{a}^{-}(\mathbf{x},t) \bullet
    \widehat{\mathbf{n}}^{-}(\mathbf{x})
    \end{align}
\end{subequations}
where $\bullet$ denotes the standard inner product, $\widehat{\mathbf{n}}^{\pm}(\mathbf{x})$ are unit normal vectors on either side of the vasculature, and $\alpha^{\pm}(\mathbf{x},t)$ represent the limiting values of the scalar field on either side of $\Sigma$. A similar definition holds for $\mathbf{a}^{\pm}(\mathbf{x},t)$. For further details on the jump operator, refer to \citep{nakshatrala2023ROM,nakshatrala2023thermal}. 

Besides the heat transported by the flowing fluid (i.e., active cooling), there are three other modes of heat transfer: the bottom surface is subject to applied heat flux, the host solid can conduct heat, and the top surface is free to convect and radiate. $h_{T}$ denotes the heat transfer coefficient, $\varepsilon$ the emissivity, and $\sigma \approx 5.67 \times 10^{-8} \; \mathrm{W}  \mathrm{m}^{-2} \mathrm{K}^{-4}$ the Stefan-Boltzmann constant. It is reasonable to assume that $h_{T}$ and $\varepsilon$, which are not properties of the bulk material, do not depend on temperature, at least for the range considered in this paper \citep{devi2023methodology}. $\rho_{s}$ denotes the density of the host solid, while $\mathbf{K}\big(\mathbf{x},\vartheta(\mathbf{x},t)\big)$ and $c_{s}\big(\vartheta(\mathbf{x},t)\big)$ represent the temperature-dependent thermal conductivity and specific heat capacity of the host solid, respectively. 

Physics demands that $c_{s}\big(\vartheta(\mathbf{x},t)\big)$ is positive and $\mathbf{K}\big(\mathbf{x},\vartheta(\mathbf{x},t)\big)$ is symmetric and positive definite. For mathematical analysis, we, however, require a stronger condition: the thermal conductivity is uniformly elliptic \citep{gilbarg2015elliptic}. That is, there exists a constant $k_1 > 0$ such that:
\begin{align}
    \label{Eqn:TProp_thermal_conductivity_positive_definite}
    0 < k_1 \mathbf{y} \bullet \mathbf{y} 
    \leq \mathbf{y} \bullet \mathbf{K}\big(\mathbf{x},\vartheta(\mathbf{x},t)\big) \, \mathbf{y} 
    \quad \forall \mathbf{y} \setminus\{\mathbf{0}\} \in \mathbb{R}^2
\end{align}
where $\mathbb{R}^{2}$ denotes the two-dimensional Euclidean space. In some sections of this paper (e.g., \S\ref{Sec:S3_TProp_MatProperties} and \S\ref{Sec:S5_TProp_NR}), we assume the thermal conductivity is isotropic:
\begin{align}
    \mathbf{K}\big(\mathbf{x},\vartheta(\mathbf{x},t)\big) 
    = k_{s} \big(\mathbf{x},\vartheta(\mathbf{x},t)\big) \,  \mathbf{I}
\end{align}
where $\mathbf{I}$ represents the second-order identity tensor, and $k_{s}(\mathbf{x},\vartheta)$ is the scalar conductivity field.
 
The governing equations for the temperature-dependent ROM take the following form:
\begin{alignat}{2}
    \label{Eqn:Temp_GE_BoE}
    &d \, \rho_{s} \, c_{s}(\vartheta) \frac{\partial \vartheta}{\partial t} + d \, \mathrm{div}[\mathbf{q}(\mathbf{x},t)] 
    = f(\mathbf{x},t) 
    - h_{T} \, \big(\vartheta(\mathbf{x},t) - \vartheta_{\mathrm{amb}}\big)
    \nonumber \\
    &\hspace{2.5in} -\varepsilon \, \sigma \, \big(\vartheta^{4}(\mathbf{x},t) - \vartheta^4_{\mathrm{amb}}\big) 
    && \quad \forall \mathbf{x} \in \Omega, \forall t \in (0,\mathcal{T}] \\
    \label{Eqn:Temp_GE_Fourier}
    &\mathbf{q}(\mathbf{x},t) 
    = - \mathbf{K}\big(\mathbf{x},\vartheta(\mathbf{x},t)\big) \, \mathrm{grad}[\vartheta(\mathbf{x},t)] 
    &&  \quad \forall \mathbf{x} \in \Omega, \forall t \in (0,\mathcal{T}] \\
    \label{Eqn:Temp_GE_temperature_BC}
    &\vartheta(\mathbf{x},t) = \vartheta^{\mathrm{p}}(\mathbf{x},t) 
    &&  \quad \forall \mathbf{x} \in \Gamma^{\vartheta}, \forall t \in [0,\mathcal{T}] \\
    \label{Eqn:Temp_GE_heat_flux_BC}
    &d \, \mathbf{q}(\mathbf{x},t) \bullet \widehat{\mathbf{n}}(\mathbf{x}) = q^{\mathrm{p}}(\mathbf{x},t) 
    && \quad \forall \mathbf{x} \in \Gamma^{q}, 
    \forall t \in [0,\mathcal{T}] \\
    \label{Eqn:Temp_GE_jump_theta}
    &\llbracket\vartheta(\mathbf{x},t)\rrbracket = \mathbf{0} 
    && \quad \forall \mathbf{x} \in \Sigma, \forall t \in [0,\mathcal{T}] \\ 
    \label{Eqn:Temp_GE_jump_q}
    &d \, \llbracket\mathbf{q}(\mathbf{x},t)\rrbracket = \chi \,  \mathrm{grad}[\vartheta] \bullet \widehat{\mathbf{t}}(\mathbf{x}) 
    && \quad \forall \mathbf{x} \in \Sigma, \forall t \in [0,\mathcal{T}] \\ 
    \label{Eqn:Temp_GE_IC}
    &\vartheta(\mathbf{x},t=0) = \vartheta_{\mathrm{initial}}(\mathbf{x})
    && \quad \forall \mathbf{x} \in \Omega \\
    \label{Eqn:Temp_GE_inlet}
    &\vartheta(\mathbf{x},t) = \vartheta_{\mathrm{inlet}}
    && \quad \mbox{at inlet}, \forall t \in [0,\mathcal{T}]
\end{alignat}
where $f(\mathbf{x},t)$ denotes the applied heat flux on the bottom surface, $\vartheta_{\mathrm{amb}}$ the ambient temperature,  $\vartheta_{\mathrm{initial}}(\mathbf{x})$ the prescribed initial temperature distribution, $\vartheta^{\mathrm{p}}(\mathbf{x},t)$ the prescribed temperature on the boundary, $q^{\mathrm{p}}(\mathbf{x},t)$ the prescribed heat flux on the boundary, and $\chi$ the heat capacity rate, defined as follows:
\begin{align}
    \label{Eqn:TProp_heat_capacity_rate}
    \chi = \rho_{f} \, Q \, c_{f}
\end{align}
with $Q$ denoting the volumetric flow rate.

Equation \eqref{Eqn:Temp_GE_BoE} is the balance of energy, accounting for conduction, convection, radiation, applied heat flux, and increase in internal energy. Equation~\eqref{Eqn:Temp_GE_Fourier} represents the Fourier model, describing the heat conduction. Equations \eqref{Eqn:Temp_GE_temperature_BC} and \eqref{Eqn:Temp_GE_heat_flux_BC} are the boundary conditions, while Eqs.~\eqref{Eqn:Temp_GE_jump_theta} and  \eqref{Eqn:Temp_GE_jump_q} are the jump conditions for the temperature and the heat flux, respectively. The next two equations represent the initial condition and the prescribed temperature at the inlet. The temperature for classical (i.e., non-spin) systems is positive when measured in the Kelvin scale \citep{kittel1998thermal}. This non-negative physical constraint will be crucial in establishing minimum and maximum principles (see \S\ref{Sec:S4_TProp_MP}).

Since the thermal conductivity could depend on the temperature, the above initial boundary value problem (IBVP) is, in its generality, a quasi-linear parabolic differential equation \citep{pao2012nonlinear}.

\section{TEMPERATURE-DEPENDENT MATERIAL PROPERTIES}
\label{Sec:S3_TProp_MatProperties}
As previously stated, the thermophysical properties of CFRP, GFRP, and epoxy---common structural composites and polymer matrix material---are known to depend on the temperature. Therefore, we have utilized the facilities at the \emph{Thermophysical Properties Research Laboratory} to perform thermal characterization of these materials \citep{TPRL}. Specific heat capacity and thermal conductivity were measured in the range from room temperature (RT $\approx$ 296.15 K) to 423.15 K. 
 
A \emph{Perkin-Elmer Differential Scanning Calorimeter} (ASTM E1269), with sapphire as the reference material, was used to measure the specific heat capacity $(c_s)$ of these three materials. As shown in \textbf{Fig.~ \ref{Fig2:TProp_Variation_of_host_properties}A}, the specific heat capacity for all three materials strongly depends on the temperature, with epoxy having the maximum variation. To facilitate numerical simulations and reveal the variation's nature, we provided an interpolation function (i.e., a polynomial fit) to capture the trend for each material.

The thermal diffusivity ($\alpha$) was measured using the laser flash method. The bulk density ($\rho_s$) was estimated directly from the sample's geometry and mass. Then, the thermal conductivity ($k_s$) was calculated as a product of the bulk density, specific heat capacity, and thermal diffusivity (i.e., $k_s = \rho_s \, c_s \, \alpha$). As shown in \textbf{Fig.~\ref{Fig2:TProp_Variation_of_host_properties}B}, the temperature had minimal impact on the thermal conductivity of epoxy and GFRP, whereas CFRP had a notable variation in the chosen temperature range.
\begin{figure}[!h]
    \centering
    \includegraphics[scale=0.44]{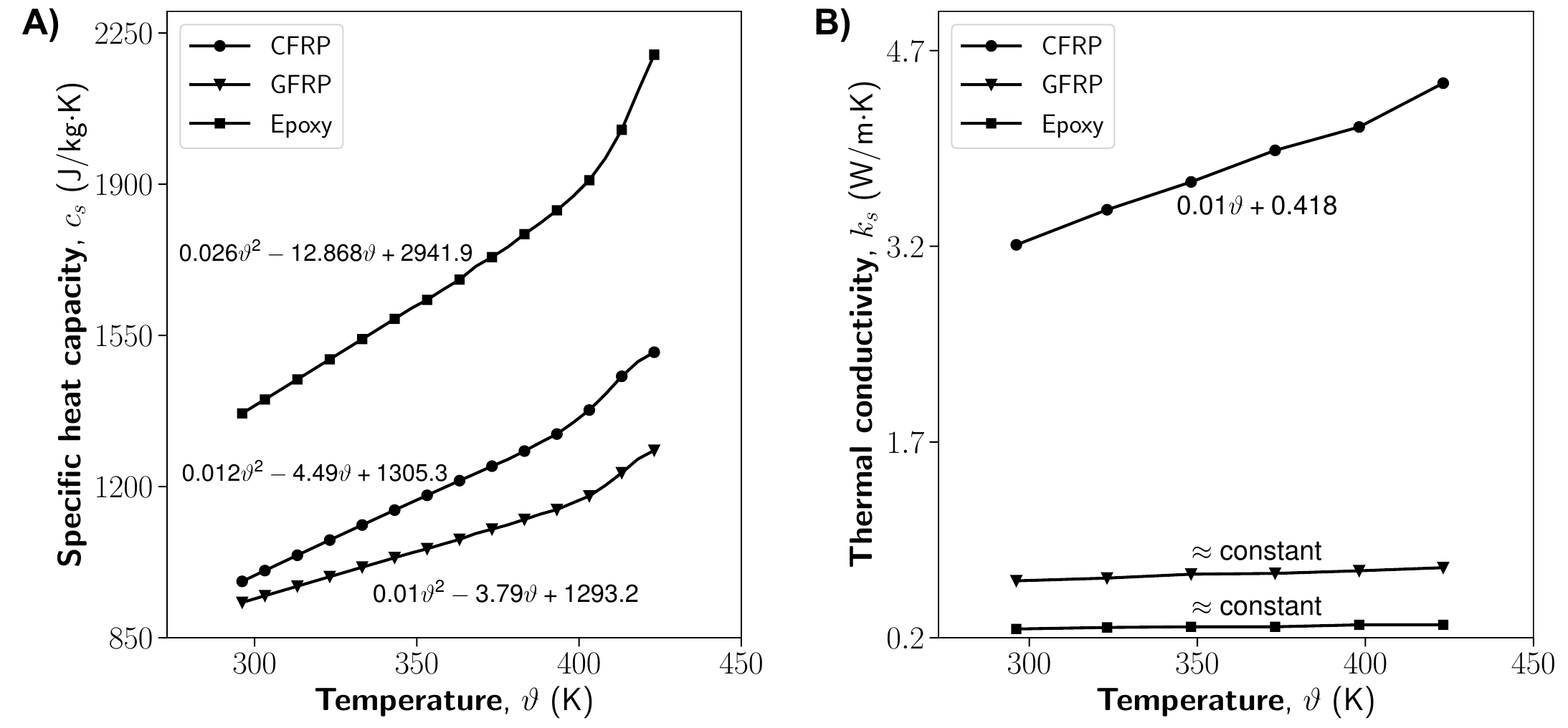}
    \caption{\textsf{Solid host material properties:} This figure shows the variations of the \textbf{A)} specific heat capacity and \textbf{B)} thermal conductivity --- in the temperature range 296.15--423.15 K for three candidate host materials: CFRP, GFRP, and epoxy. The figure also provides the best-fit polynomial for each curve. The specific heat capacity varies significantly with temperature for all three materials. However, the thermal conductivity remains relatively constant except for CFRP.
    \label{Fig2:TProp_Variation_of_host_properties}}
\end{figure}

For water (i.e., the coolant), we took the density and specific heat capacity values for the temperature range 0--100 $^\circ \mathrm{C}$ from \citep{scienceschool} and \citep{engineeringtoolbox}, respectively. As shown in \textbf{Fig.~\ref{Fig3:TProp_Water_properties}}, the heat capacity rate did not change appreciably with temperature for a volumetric flow rate of $Q = 1 \, \mathrm{mL/min}$, which lies in the typical range of flow rates used for active cooling of microvascular composites \citep{devi2021microvascular}. \emph{Therefore, we took the coolant's material properties to be constant in the rest of this paper.}

In the subsequent sections, we investigate how the temperature-dependent properties impact the active cooling performance.

\begin{figure}[!h]
    \centering
    \includegraphics[scale=0.43]{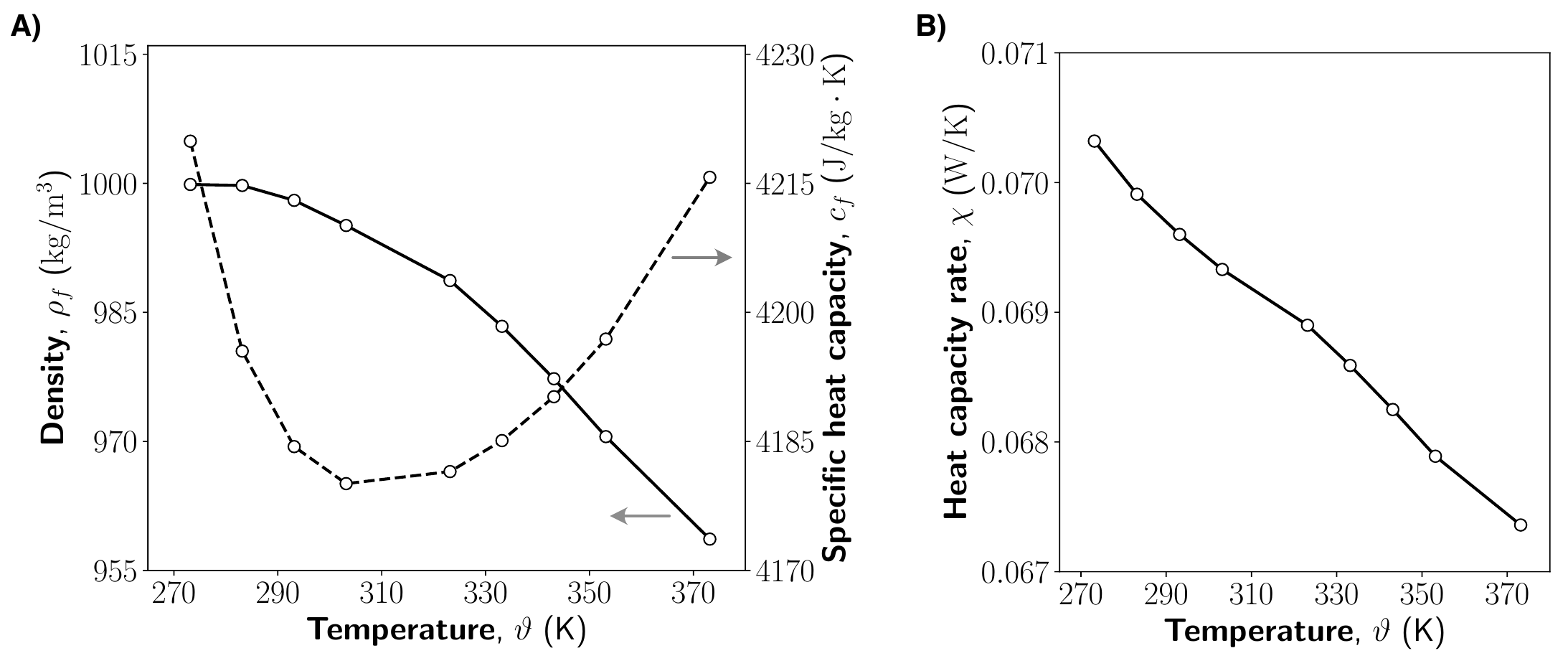}
    \caption{\textsf{Liquid coolant (fluid) properties:} \textbf{A)} The density and specific heat capacity of water (coolant) vary only 4.30\% and 0.95\%, respectively, over the temperature range 0--100 $^\circ\mathrm{C}$. These data were sourced from \citep{scienceschool} and \citep{engineeringtoolbox}, respectively. \textbf{B)} The heat capacity rate (i.e., the product of the density, specific heat capacity, and the volumetric flow rate) also varies merely 4.39\% over the chosen temperature range for the volumetric flow rate $Q = 1 \, \mathrm{mL/min}$. Hence, in this paper, we have taken the fluid's heat capacity, which is an input to the mathematical model, to be constant.     \label{Fig3:TProp_Water_properties}}
\end{figure}

\pagebreak
\section{MINIMUM AND MAXIMUM PRINCIPLES}
\label{Sec:S4_TProp_MP}
Recently, \citet{nakshatrala2023ROM} has established minimum and maximum principles under steady-state regime for \emph{constant} material properties. This section shows that such minimum and maximum principles hold even when the material properties depend on the temperature.  

In the steady-state regime, the governing equations of the mathematical model take the following form:
\begin{subequations}
\begin{alignat}{2}
    \label{Eqn:Temp_SS_GE_BoE}
    -&d \, \mathrm{div}\big[\mathbf{K}\big(\mathbf{x},\vartheta(\mathbf{x})\big) \, \mathrm{grad}[\vartheta(\mathbf{x})] \big] 
    = f(\mathbf{x}) 
    - h_{T} \, \big(\vartheta(\mathbf{x}) - \vartheta_{\mathrm{amb}}\big)
    -\varepsilon \, \sigma \, \big(\vartheta^{4}(\mathbf{x}) - \vartheta^4_{\mathrm{amb}}\big) 
    && \quad \forall \mathbf{x} \in \Omega \\
    \label{Eqn:Temp_SS_GE_temperature_BC}
    &\vartheta(\mathbf{x}) = \vartheta^{\mathrm{p}}(\mathbf{x}) 
    &&  \quad \forall \mathbf{x} \in \Gamma^{\vartheta} \\
    \label{Eqn:Temp_SS_GE_heat_flux_BC}
    -&d \, \widehat{\mathbf{n}}(\mathbf{x}) \bullet \mathbf{K}\big(\mathbf{x},\vartheta(\mathbf{x})\big) \, \mathrm{grad}[\vartheta(\mathbf{x})] = q^{\mathrm{p}}(\mathbf{x}) 
    && \quad \forall \mathbf{x} \in \Gamma^{q} \\
    \label{Eqn:Temp_SS_GE_jump_theta}
    &\llbracket\vartheta(\mathbf{x})\rrbracket = \mathbf{0} 
    && \quad \forall \mathbf{x} \in \Sigma \\ 
    \label{Eqn:Temp_SS_GE_jump_q}
    -&d \, \llbracket
    \mathbf{K}\big(\mathbf{x},\vartheta(\mathbf{x})\big) \, \mathrm{grad}[\vartheta(\mathbf{x})]
    \rrbracket = \chi \,  \mathrm{grad}[\vartheta] \bullet \widehat{\mathbf{t}}(\mathbf{x}) 
    && \quad \forall \mathbf{x} \in \Sigma \\ 
    \label{Eqn:Temp_SS_GE_inlet}
    &\vartheta(\mathbf{x}) = \vartheta_{\mathrm{inlet}}
    && \quad \mbox{at inlet}
\end{alignat}
\end{subequations}

To establish the mentioned principles, we appeal to the Galerkin weak formulation. Accordingly, we use $L_2(\mathcal{K})$ to denote the set of square-integrable functions defined on $\mathcal{K}$. That is, 
\begin{align}
    L_2(\mathcal{K}) := \Big\{u(\mathbf{x}): \mathcal{K} \rightarrow \mathbb{R} \, \Big\vert \, \int_{\mathcal{K}} u^{2}(\mathbf{x}) \; \mathrm{d} \mathcal{K} < +\infty \Big\}
\end{align}
$H^{1}(\mathcal{K})$ denotes the space of square-integrable functions defined on $\mathcal{K}$ with their first (weak) derivatives also square-integrable. Mathematically, 
\begin{align}
    H^1(\mathcal{K}) := \Big\{u(\mathbf{x}) \in L_2(\mathcal{K}) \, \Big\vert \, \int_{\mathcal{K}} \big\|\mathrm{grad}[u(\mathbf{x})]\big\|^{2} \; \mathrm{d} \mathcal{K} < +\infty \Big\}
\end{align}
In the mathematics literature, $L_2(\mathcal{K})$ and $H^{1}(\mathcal{K})$ are called Sobolev spaces, which are themselves Hilbert spaces, and they offer an appropriate functional analytical setting to study partial differential equations \citep{brezis2011functional}.

We then define the following spaces for the trial and test functions, respectively: 
\begin{align}
    \label{Eqn:TProp_trial_function_space}
    \mathcal{U} &:= \left\{\vartheta(\mathbf{x}) \in H^{1}(\Omega) \; \vert \; \vartheta(\mathbf{x}) = \vartheta^\mathrm{p}(\mathbf{x}) \; \mathrm{on} \; \Gamma^\vartheta \; \mathrm{and} \; \vartheta(\mathbf{x})=\vartheta_\mathrm{inlet} \; \mathrm{at} \; s=0 \;\mathrm{on}\; \Sigma\right\} \\
    \label{Eqn:TProp_test_function_space}
    \mathcal{W} &:= \left\{w(\mathbf{x}) \in H^{1}(\Omega) \; \vert \; w(\mathbf{x}) = 0 \; \mathrm{on}\; \Gamma^\vartheta \; \mathrm{and} \; w(\mathbf{x}) = 0 \; \mathrm{at} \; s=0 \; \mathrm{on} \; \Sigma \right\}
\end{align}
Under steady-state conditions, the Galerkin weak formulation reads: Find $\vartheta(\mathbf{x}) \in \mathcal{U}$ such that we have 
\begin{align}
    \label{Eqn:TProp_Galerkin_formulation}
    &\int_{\Omega} d \, \mathrm{grad}[w(\mathbf{x})] \bullet \mathbf{K}(\mathbf{x},\vartheta(\mathbf{x})) \, \mathrm{grad}[\vartheta(\mathbf{x})] \,  \mathrm{d} \Omega  
    +\int_{\Omega} h_T\,w(\mathbf{x})\,(\vartheta(\mathbf{x})-\vartheta_\mathrm{amb})\,  \mathrm{d}\Omega 
    \nonumber \\
    &\qquad +\int_{\Omega} \varepsilon \, \sigma \, w(\mathbf{x})\,\big(\vartheta^{4}(\mathbf{x})-\vartheta^{4}_\mathrm{amb}\big)\,  \mathrm{d}\Omega  
    +\int_{\Sigma} \chi\, w(\mathbf{x})\;\mathrm{grad}[\vartheta(\mathbf{x})]\bullet \widehat{\mathbf{t}}(\mathbf{x})\, \mathrm{d}\Gamma 
    \nonumber \\
    &\qquad \qquad = \int_{\Omega} w(\mathbf{x})\;f(\mathbf{x})\,\mathrm{d}\Omega - 
    \int_{\Gamma^q} w(\mathbf{x})\; q^\mathrm{p}(\mathbf{x})\, \mathrm{d}\Gamma\,\,\,\,\forall w(\mathbf{x})\in \mathcal{W}
\end{align}
where $w(\mathbf{x})$ represents the weighting (or test) function.

\begin{theorem}[minimum principle]
\label{Thm:TProp_Minimum_principle}
Let $\vartheta(\mathbf{x})$ be a non-negative solution of the boundary value problem \eqref{Eqn:Temp_SS_GE_BoE}--\eqref{Eqn:Temp_SS_GE_inlet} under $f(\mathbf{x})\geq 0$, $q^\mathrm{p}(\mathbf{x})\leq 0$, and $\vartheta_{\mathrm{amb}} > 0$. Then, the solution field is bounded below by 
\begin{align}
    \mathrm{min}\Big[\vartheta_\mathrm{amb}, \vartheta_\mathrm{inlet}, \min_{\mathbf{x}\in\Gamma^{\vartheta}}[\vartheta^{\mathrm{p}}(\mathbf{x})]\Big] \leq 
    \vartheta(\mathbf{x})
    \;\; \forall \mathbf{x}\in \overline {\Omega}
\end{align}
\end{theorem}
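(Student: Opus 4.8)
The plan is to run an energy-type argument directly on the Galerkin weak form \eqref{Eqn:TProp_Galerkin_formulation}, testing against the negative part of the temperature relative to the claimed lower bound. Write $m := \min[\vartheta_{\mathrm{amb}}, \vartheta_{\mathrm{inlet}}, \min_{\mathbf{x}\in\Gamma^{\vartheta}}\vartheta^{\mathrm{p}}(\mathbf{x})]$ and set $w(\mathbf{x}) := \max(m - \vartheta(\mathbf{x}), 0)$, supported on the sublevel set $\Omega^{-} := \{\mathbf{x}\in\Omega : \vartheta(\mathbf{x}) < m\}$. First I would verify that $w$ is an admissible test function, i.e., $w\in\mathcal{W}$: it lies in $H^{1}(\Omega)$ since the positive part of an $H^{1}$ function is again $H^{1}$; on $\Gamma^{\vartheta}$ one has $\vartheta = \vartheta^{\mathrm{p}} \geq m$, so $w = 0$ there; and at the inlet $\vartheta = \vartheta_{\mathrm{inlet}} \geq m$, so $w = 0$ at $s=0$ on $\Sigma$, exactly as demanded by \eqref{Eqn:TProp_test_function_space}.

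Next I would substitute this $w$ into \eqref{Eqn:TProp_Galerkin_formulation} and track the sign of each term, using that on $\Omega^{-}$ we have $w = m - \vartheta > 0$ and $\mathrm{grad}[w] = -\mathrm{grad}[\vartheta]$, while $w \equiv 0$ and $\mathrm{grad}[w] \equiv 0$ off $\Omega^{-}$. The conduction term reduces to $-\int_{\Omega^{-}} d\,\mathrm{grad}[w]\bullet\mathbf{K}(\mathbf{x},\vartheta)\,\mathrm{grad}[w]\,\mathrm{d}\Omega$, which is bounded above by $-d\,k_{1}\int_{\Omega^{-}}\|\mathrm{grad}[w]\|^{2}\,\mathrm{d}\Omega \leq 0$ via the uniform ellipticity \eqref{Eqn:TProp_thermal_conductivity_positive_definite}. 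This step uses only the pointwise lower bound on $\mathbf{K}$ and is therefore completely insensitive to the temperature dependence $\mathbf{K} = \mathbf{K}(\mathbf{x},\vartheta)$, which is the whole point of extending the constant-coefficient result. The convection term $\int_{\Omega^{-}} h_{T}\,w\,(\vartheta - \vartheta_{\mathrm{amb}})\,\mathrm{d}\Omega \leq 0$ because $w \geq 0$ and $\vartheta < m \leq \vartheta_{\mathrm{amb}}$. For the radiation term I would factor $\vartheta^{4} - \vartheta_{\mathrm{amb}}^{4}$ and invoke the standing hypotheses $\vartheta \geq 0$ and $\vartheta_{\mathrm{amb}} > 0$: on $\Omega^{-}$ we have $0 \leq \vartheta < \vartheta_{\mathrm{amb}}$, hence $\vartheta^{4} - \vartheta_{\mathrm{amb}}^{4} < 0$ and the term is $\leq 0$ --- this is precisely where the non-negativity constraint is used. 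Finally the right-hand side $\int_{\Omega} w\,f\,\mathrm{d}\Omega - \int_{\Gamma^{q}} w\,q^{\mathrm{p}}\,\mathrm{d}\Gamma$ is $\geq 0$ from $f \geq 0$, $q^{\mathrm{p}} \leq 0$, and $w \geq 0$.

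The main obstacle is the vasculature integral $\int_{\Sigma} \chi\,w\,\mathrm{grad}[\vartheta]\bullet\widehat{\mathbf{t}}\,\mathrm{d}\Gamma$, which carries no obvious pointwise sign. Here I would parameterize $\Sigma$ by arc-length $s$ so that $\mathrm{grad}[\vartheta]\bullet\widehat{\mathbf{t}} = \mathrm{d}\vartheta/\mathrm{d}s$, and note that on the support of $w$ one has $w\,\mathrm{d}\vartheta/\mathrm{d}s = -w\,\mathrm{d}w/\mathrm{d}s = -\tfrac{1}{2}\,\mathrm{d}(w^{2})/\mathrm{d}s$. Integrating along the curve from the inlet ($s=0$) to the outlet then telescopes this term into the pure boundary contribution $-\tfrac{\chi}{2}\big(w^{2}|_{\mathrm{outlet}} - w^{2}|_{\mathrm{inlet}}\big)$; since $w = 0$ at the inlet and $\chi > 0$, this is $\leq 0$. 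One must take care that the arc-length integration is carried out along the oriented flow direction and that $w^{2}$ is absolutely continuous along $\Sigma$, which follows from its $H^{1}$ regularity together with the trace theorem.

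With all four left-hand terms $\leq 0$ and the right-hand side $\geq 0$, the equality in \eqref{Eqn:TProp_Galerkin_formulation} forces each term to vanish. In particular the conduction term vanishes, so $\int_{\Omega^{-}}\|\mathrm{grad}[w]\|^{2}\,\mathrm{d}\Omega = 0$ and $w$ is constant on each connected component of $\Omega$; combined with $w = 0$ on $\Gamma^{\vartheta}$ --- or, should the Dirichlet portion be empty, the vanishing of the convection and radiation terms, whose integrands are strictly negative wherever $w > 0$ (using $h_{T} > 0$ and $\varepsilon\,\sigma > 0$) --- this yields $w \equiv 0$. Hence $\Omega^{-}$ has measure zero and $\vartheta(\mathbf{x}) \geq m$ for all $\mathbf{x}\in\overline{\Omega}$, which is the asserted bound. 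I expect the only delicate points to be the $H^{1}$ admissibility of the truncated test function $w$ and the rigorous justification of the arc-length integration by parts on $\Sigma$.
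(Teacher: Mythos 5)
Your proposal is correct and follows essentially the same route as the paper's proof: both test the Galerkin weak form \eqref{Eqn:TProp_Galerkin_formulation} with the truncation of $\vartheta$ at the candidate lower bound (your $w=\max(m-\vartheta,0)$ is just the negative of the paper's $\beta=\min[0,\vartheta-\Phi_{\min}]$), verify admissibility in $\mathcal{W}$ via the Dirichlet and inlet conditions, run the identical term-by-term sign analysis (uniform ellipticity for conduction, $m\leq\vartheta_{\mathrm{amb}}$ for convection, non-negativity of $\vartheta$ for radiation), and telescope the vasculature integral along the arc-length using the inlet condition. The only cosmetic differences are the sign convention and that you handle the radiation term by monotonicity of $x\mapsto x^{4}$ on $[0,\infty)$ rather than by factoring the quartic as the paper does.
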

\begin{proof}
For convenience, we denote the targeted lower bound by 
\begin{align}
    \label{Eqn:TProp_Phi_min}
    \Phi_{\mathrm{min}} := \min\Big[\vartheta_\mathrm{amb}, \vartheta_\mathrm{inlet}, \min_{\mathbf{x}\in\Gamma^{\vartheta}}[\vartheta^{\mathrm{p}}(\mathbf{x})]\Big]
\end{align}
We also define the following field variable: 
\begin{align}
    \label{Eqn:_TProp_beta_definition}
    \beta(\mathbf{x}) := \min\Big[0, \vartheta(\mathbf{x})-\Phi_{\mathrm{min}}\Big]
\end{align}
The above definitions imply that establishing the minimum principle is equivalent to showing that 
\begin{align}
\label{Eqn:_TProp_beta_zero}
\beta(\mathbf{x}) = 0 \quad \forall \mathbf{x} \in \overline{\Omega}
\end{align}

To prove that $\beta(\mathbf{x})$ vanishes, we first note the following properties, which directly stem from Eqs.~\eqref{Eqn:_TProp_beta_definition} and \eqref{Eqn:_TProp_beta_zero}: 
\begin{align}
    \label{Eqn:TProp_beta_property_1}
    &\beta(\mathbf{x}) \leq 0 \quad \forall \mathbf{x}\in \overline {\Omega} \\
    \label{Eqn:TProp_beta_property_2}
    &\beta(\mathbf{x}) = 0 \quad \forall \mathbf{x}\in \Gamma^{\vartheta} \\
    \label{Eqn:TProp_beta_property_3}
    &\beta(\mathbf{x}) = 0 \quad \mbox{at the inlet (i.e., $s=0$ on $\Sigma$)}
\end{align}
Further, at any spatial point, we have 
\begin{align}
    \label{Eqn:TProp_beta_property_4}
    \mathrm{either} \; \beta(\mathbf{x}) = 0 \; \mathrm{or} \; \vartheta(\mathbf{x}) = \beta(\mathbf{x})+\Phi_{\min}
\end{align}
The second property (i.e., Eq.~\eqref{Eqn:TProp_beta_property_2}) implies that $\beta(\mathbf{x}) \in \mathcal{W}$. Taking $w(\mathbf{x}) = \beta(\mathbf{x})$ in Eq.~\eqref{Eqn:TProp_Galerkin_formulation}, we arrive at the following: 
\begin{align}
    \label{Eqn:TProp_MP_step1}
    &\int_{\Omega} d \, \mathrm{grad}[\beta(\mathbf{x})] \bullet \mathbf{K}(\mathbf{x},\vartheta(\mathbf{x})) \, \mathrm{grad}[\vartheta(\mathbf{x})] \,  \mathrm{d} \Omega 
    +\int_{\Omega} h_T\,\beta(\mathbf{x})\,(\vartheta(\mathbf{x})-\vartheta_\mathrm{amb})\,  \mathrm{d}\Omega 
    \nonumber \\ 
    &\hspace{1in} + \int_{\Omega} \varepsilon \, \sigma 
    \, \beta(\mathbf{x}) \, \big(\vartheta^{4}(\mathbf{x}) - \vartheta_{\mathrm{amb}}^4 \big) \, \mathrm{d} \Omega 
    +\int_{\Sigma} \chi\, \beta(\mathbf{x})\;\mathrm{grad}[\vartheta(\mathbf{x})]\bullet \widehat{\mathbf{t}}(\mathbf{x})\, \mathrm{d}\Gamma \nonumber \\
    &\hspace{1in} = \int_{\Omega} \beta(\mathbf{x})\;f(\mathbf{x})\,\mathrm{d}\Omega - 
    \int_{\Gamma^q} \beta(\mathbf{x})\; q^\mathrm{p}(\mathbf{x})\, \mathrm{d}\Gamma
\end{align}
Noting that $f(\mathbf{x}) \geq 0$, $q^{\mathrm{p}}(\mathbf{x}) \leq 0$, and the non-positivity of $\beta(\mathbf{x})$ (i.e., the first property: Eq.~\eqref{Eqn:TProp_beta_property_1}), we conclude that the two integrals on the right side of Eq.~\eqref{Eqn:TProp_MP_step1} are non-positive. We thus write the following inequality: 
\begin{align}
    \label{Eqn:TProp_MP_step2}
    &\int_{\Omega} d \, \mathrm{grad}[\beta(\mathbf{x})] \bullet \mathbf{K}(\mathbf{x},\vartheta(\mathbf{x})) \, \mathrm{grad}[\vartheta(\mathbf{x})] \,  \mathrm{d} \Omega 
    +\int_{\Omega} h_T\,\beta(\mathbf{x})\,(\vartheta(\mathbf{x})-\vartheta_\mathrm{amb})\,  \mathrm{d}\Omega 
    \nonumber \\ 
    &\hspace{1in} + \int_{\Omega} \varepsilon \, \sigma 
    \, \beta(\mathbf{x}) \, \big(\vartheta^{4}(\mathbf{x}) - \vartheta_{\mathrm{amb}}^4 \big) \, \mathrm{d} \Omega 
    +\int_{\Sigma} \chi\, \beta(\mathbf{x})\;\mathrm{grad}[\vartheta(\mathbf{x})]\bullet \widehat{\mathbf{t}}(\mathbf{x})\, \mathrm{d}\Gamma 
    \leq 0
\end{align}
In the above equation, we note that any term that contains $\vartheta(\mathbf{x})$ also carries $\beta(\mathbf{x})$. The fourth property (i.e., Eq.~\eqref{Eqn:TProp_beta_property_4}) implies that, at any spatial point, if $\vartheta(\mathbf{x})$ is not equal to $\beta(\mathbf{x}) + \Phi_{\min}$ then $\beta(\mathbf{x}) = 0$. Thus, replacing $\vartheta(\mathbf{x})$ by $\beta(\mathbf{x}) + \Phi_{\min}$ will not alter the values of such terms. Consequently, upon undertaking the remarked replacement, Eq.~\eqref{Eqn:TProp_MP_step2} turns into the following:
\begin{align}
    \label{Eqn:TProp_MP_step3}
    &\int_{\Omega} d \, \mathrm{grad}[\beta(\mathbf{x})] \bullet \mathbf{K}(\mathbf{x},\vartheta(\mathbf{x})) \, \mathrm{grad}[\beta(\mathbf{x})] \,  \mathrm{d} \Omega + 
    \int_{\Omega} h_T\,\beta(\mathbf{x})\,(\beta(\mathbf{x}) + \Phi_{\mathrm{min}}-\vartheta_\mathrm{amb})\,  \mathrm{d}\Omega \nonumber \\
    &\qquad + \int_{\Omega} \varepsilon \, \sigma 
    \, \beta(\mathbf{x}) \, \big(\beta(\mathbf{x}) + \Phi_{\min} 
    - \vartheta_{\mathrm{amb}} \big)
    \big(\vartheta(\mathbf{x}) + \vartheta_{\mathrm{amb}} \big)
    \big(\vartheta^{2}(\mathbf{x}) + \vartheta_{\mathrm{amb}}^2 \big) \, \mathrm{d} \Omega 
    \nonumber \\
    &\qquad \qquad +\int_{\Sigma} \chi\, \beta(\mathbf{x})\;\mathrm{grad}[\beta(\mathbf{x})]\bullet \widehat{\mathbf{t}}(\mathbf{x})\, \mathrm{d}\Gamma 
    \leq 0  
\end{align}
Definition \eqref{Eqn:TProp_Phi_min} implies  $(\Phi_{\mathrm{min}}-\vartheta_{\mathrm{amb}}) \leq 0$, which further implies the product $\beta(\mathbf{x}) \; (\Phi_{\mathrm{min}}-\vartheta_{\mathrm{amb}})$ is non-negative. Using this result on the second integral in  Eq.~\eqref{Eqn:TProp_MP_step3}, we establish the following inequality:
\begin{align}
    \label{Eqn:TProp_MP_step4}
    &\int_{\Omega} d \, \mathrm{grad}[\beta(\mathbf{x})] \bullet \mathbf{K}(\mathbf{x},\vartheta(\mathbf{x})) \, \mathrm{grad}[\beta(\mathbf{x})] \,  \mathrm{d} \Omega + 
    \int_{\Omega} h_T\,\beta^2(\mathbf{x}) \,  \mathrm{d}\Omega \nonumber \\
    &\qquad + \int_{\Omega} \varepsilon \, \sigma 
    \, \beta(\mathbf{x}) \, \big(\beta(\mathbf{x}) + \Phi_{\min} 
    - \vartheta_{\mathrm{amb}} \big)
    \big(\vartheta(\mathbf{x}) + \vartheta_{\mathrm{amb}} \big)
    \big(\vartheta^{2}(\mathbf{x}) + \vartheta_{\mathrm{amb}}^2 \big) \, \mathrm{d} \Omega 
    \nonumber \\
    &\qquad \qquad +\int_{\Sigma} \chi\, \beta(\mathbf{x})\;\mathrm{grad}[\beta(\mathbf{x})]\bullet \widehat{\mathbf{t}}(\mathbf{x})\, \mathrm{d}\Gamma 
    \leq 0  
\end{align}
The third integral is non-negative because $\beta(\mathbf{x}) \leq 0$, $\beta(\mathbf{x}) + \Phi_{\min} - \vartheta_{\mathrm{amb}} \leq 0$, $\vartheta(\mathbf{x}) + \vartheta_{\mathrm{amb}} \geq 0$ (since the solution field is non-negative and $\vartheta_{\mathrm{amb}} > 0$), and $\vartheta^{2}(\mathbf{x}) + \vartheta_{\mathrm{amb}}^2 \geq 0$. We thus have
\begin{align}
    \label{Eqn:TProp_MP_step5}
    \int_{\Omega} d \, \mathrm{grad}[\beta(\mathbf{x})] \bullet \mathbf{K}(\mathbf{x},\vartheta(\mathbf{x})) \, \mathrm{grad}[\beta(\mathbf{x})] \,  \mathrm{d} \Omega 
    &+\int_{\Omega} h_T\,\beta^{2}(\mathbf{x}) \,  \mathrm{d}\Omega
    \nonumber \\ 
   &\qquad +\int_{\Sigma} \chi\, \beta(\mathbf{x})\;\mathrm{grad}[\beta(\mathbf{x})]\bullet \widehat{\mathbf{t}}(\mathbf{x})\, \mathrm{d}\Gamma 
    \leq 0  
\end{align}
Since the thermal conductivity is uniformly elliptic (i.e., Eq.~\eqref{Eqn:TProp_thermal_conductivity_positive_definite}), the above inequality implies the following: 
\begin{align}
    \label{Eqn:TProp_MP_step6}
    \int_{\Omega} d \, \mathrm{grad}[\beta(\mathbf{x})] \bullet k_1 \, \mathrm{grad}[\beta(\mathbf{x})] \,  \mathrm{d} \Omega + 
    &\int_{\Omega} h_T\,\beta^{2}(\mathbf{x}) 
    \, \mathrm{d}\Omega
    +\int_{\Sigma} \chi\, \beta(\mathbf{x})\;\mathrm{grad}[\beta(\mathbf{x})]\bullet \widehat{\mathbf{t}}(\mathbf{x})\, \mathrm{d}\Gamma 
    \leq 0 
\end{align}
Integrating the third expression in Eq.~\eqref{Eqn:TProp_MP_step6} along the vasculature and noting $\beta(\mathbf{x})$ vanishes at the inlet (i.e., the third property given by Eq.~\eqref{Eqn:TProp_beta_property_3}), we get the following:
\begin{align}
    \label{Eqn:TProp_MP_step7}
    \int_{\Omega} d \, \mathrm{grad}[\beta(\mathbf{x})] \bullet k_1 \, \mathrm{grad}[\beta(\mathbf{x})] \,  \mathrm{d} \Omega + 
    &\int_{\Omega} h_T\,\beta^{2}(\mathbf{x}) 
    \, \mathrm{d}\Omega +
    \frac{\chi}{2} \, \beta^{2}(\mathbf{x})\Big\vert_{\mathrm{outlet}}
    \leq 0 
\end{align}
All the terms on the left side of the above inequality are non-negative. Hence, each term must be zero, implying that 
\begin{align}
    \label{Eqn:TProp_MP_step8}
    \beta(\mathbf{x}) = 0 \quad \forall \mathbf{x} \in \overline{\Omega}
\end{align}
rendering the minimum principle to be true.
\end{proof}

In the case of linear problems, one can obtain the maximum principle from the minimum principle by rewriting the governing equations in terms of a new variable $\varphi(\mathbf{x}) = - \vartheta(\mathbf{x})$ and running through a similar course of the proof outlined under the minimum principle. For example, see \citep[Theorems 3.1 and 3.3]{nakshatrala2023ROM} for the case of constant material properties. Since the problem at hand is nonlinear (because of the radiation term as well as temperature-dependent material properties), the remarked approach does not work. So, we prove the maximum principle by alternative means, as given below.

\begin{theorem}[maximum principle]
\label{Thm:TProp_Maximum_principle}
Let $\vartheta(\mathbf{x})$ be a non-negative solution of the boundary value problem \eqref{Eqn:Temp_SS_GE_BoE}--\eqref{Eqn:Temp_SS_GE_inlet} under $f(\mathbf{x}) \leq 0$ and $0 \leq q^\mathrm{p}(\mathbf{x})$, and $\vartheta_{\mathrm{amb}} > 0$. Then, the solution field is bounded above by:
\begin{align}
    \vartheta(\mathbf{x}) \leq \mathrm{max}\Big[\vartheta_\mathrm{amb}, \vartheta_\mathrm{inlet}, \max_{\mathbf{x}\in\Gamma^{\vartheta}}[\vartheta^{\mathrm{p}}(\mathbf{x})]\Big]
    \;\; \forall \mathbf{x}\in \overline {\Omega}
\end{align}
\end{theorem}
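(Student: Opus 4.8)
The plan is to mirror the proof of the minimum principle, truncating from above instead of below. First I would set $\Phi_{\max} := \max\big[\vartheta_{\mathrm{amb}}, \vartheta_{\mathrm{inlet}}, \max_{\mathbf{x}\in\Gamma^{\vartheta}}[\vartheta^{\mathrm{p}}(\mathbf{x})]\big]$ and introduce the field $\gamma(\mathbf{x}) := \max\big[0, \vartheta(\mathbf{x}) - \Phi_{\max}\big]$, so that proving the maximum principle is equivalent to showing $\gamma(\mathbf{x}) = 0$ on $\overline{\Omega}$. By construction $\gamma(\mathbf{x}) \geq 0$ everywhere, $\gamma$ vanishes on $\Gamma^{\vartheta}$ and at the inlet (because $\vartheta^{\mathrm{p}} \leq \Phi_{\max}$ and $\vartheta_{\mathrm{inlet}} \leq \Phi_{\max}$), and at every point either $\gamma = 0$ or $\vartheta = \gamma + \Phi_{\max}$. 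The vanishing on $\Gamma^{\vartheta}$ and at $s=0$ shows $\gamma \in \mathcal{W}$, so I may legitimately take $w(\mathbf{x}) = \gamma(\mathbf{x})$ in the weak form \eqref{Eqn:TProp_Galerkin_formulation}.

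With this choice the sign hypotheses $f(\mathbf{x}) \leq 0$ and $q^{\mathrm{p}}(\mathbf{x}) \geq 0$, together with $\gamma \geq 0$, make both right-hand-side terms of \eqref{Eqn:TProp_Galerkin_formulation} non-positive, so the left-hand side is $\leq 0$. I would then replace $\vartheta$ by $\gamma + \Phi_{\max}$ in each term (valid because every term carries a factor $\gamma$, and wherever $\gamma \neq 0$ the identity $\vartheta = \gamma + \Phi_{\max}$ holds). The conduction term becomes $\int_{\Omega} d\, \mathrm{grad}[\gamma]\bullet \mathbf{K}\,\mathrm{grad}[\gamma]\,\mathrm{d}\Omega$; the convective-loss term becomes $\int_{\Omega} h_T\,\gamma\,(\gamma + \Phi_{\max} - \vartheta_{\mathrm{amb}})\,\mathrm{d}\Omega$; and the radiation term is rewritten via $\vartheta^{4} - \vartheta_{\mathrm{amb}}^{4} = (\vartheta - \vartheta_{\mathrm{amb}})(\vartheta + \vartheta_{\mathrm{amb}})(\vartheta^{2} + \vartheta_{\mathrm{amb}}^{2})$.

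The decisive sign bookkeeping is where the definition of $\Phi_{\max}$ earns its keep. Since $\Phi_{\max} \geq \vartheta_{\mathrm{amb}}$ by definition, the product $\gamma\,(\Phi_{\max} - \vartheta_{\mathrm{amb}})$ is non-negative, so the convective-loss integral is bounded below by $\int_{\Omega} h_T\,\gamma^{2}\,\mathrm{d}\Omega \geq 0$. For the radiation term, $\vartheta - \vartheta_{\mathrm{amb}} = \gamma + \Phi_{\max} - \vartheta_{\mathrm{amb}} \geq 0$, while non-negativity of $\vartheta$ and positivity of $\vartheta_{\mathrm{amb}}$ give $\vartheta + \vartheta_{\mathrm{amb}} \geq 0$ and $\vartheta^{2} + \vartheta_{\mathrm{amb}}^{2} \geq 0$, so that integrand is non-negative. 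Invoking uniform ellipticity \eqref{Eqn:TProp_thermal_conductivity_positive_definite} bounds the conduction term below by $k_1 \int_{\Omega} d\,\|\mathrm{grad}[\gamma]\|^{2}\,\mathrm{d}\Omega \geq 0$, and integrating the vasculature term along the arc-length with $\gamma$ vanishing at the inlet collapses it to $\frac{\chi}{2}\,\gamma^{2}\big\vert_{\mathrm{outlet}} \geq 0$.

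Collecting these, the left-hand side is a sum of non-negative quantities that is simultaneously $\leq 0$, forcing each term to vanish; in particular $\int_{\Omega} h_T\,\gamma^{2}\,\mathrm{d}\Omega = 0$ yields $\gamma \equiv 0$ on $\overline{\Omega}$, which is the maximum principle. The main obstacle is exactly the one flagged before the theorem: the radiation and temperature-dependence nonlinearities rule out the $\varphi = -\vartheta$ reduction, so the work lies in confirming that the truncation $\gamma = \max[0, \vartheta - \Phi_{\max}]$ keeps every nonlinear term on the correct side. The only genuinely new sign check relative to the minimum principle is that $\Phi_{\max} - \vartheta_{\mathrm{amb}} \geq 0$ (rather than $\leq 0$), which holds because $\vartheta_{\mathrm{amb}}$ is itself one of the arguments of the outer $\max$; everything else transcribes from the minimum-principle argument with the inequalities reversed.
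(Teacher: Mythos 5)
Your proposal is correct and follows essentially the same route as the paper's own proof: the same truncation $\max\big[0,\vartheta(\mathbf{x})-\Phi_{\max}\big]$ used as the test function in the Galerkin weak form, the same sign bookkeeping on the convective, radiative, and vasculature terms (including the key observation $\Phi_{\max}-\vartheta_{\mathrm{amb}}\geq 0$ and the collapse of the channel integral to $\tfrac{\chi}{2}\,\gamma^{2}\big\vert_{\mathrm{outlet}}$), and the same concluding forcing argument. The only cosmetic difference is that you drop the radiation integral in one step by noting its integrand is non-negative, whereas the paper first reduces it to a $\beta^{2}$-weighted form before discarding it.
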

\begin{proof}
    We denote the upper bound by 
\begin{align}
    \label{Eqn:TProp_Phi_max}
    \Phi_{\mathrm{max}} := \max\Big[\vartheta_\mathrm{amb}, \vartheta_\mathrm{inlet}, \max_{\mathbf{x}\in\Gamma^{\vartheta}}[\vartheta^{\mathrm{p}}(\mathbf{x})]\Big]
\end{align}
We also define the following field variable:
\begin{align}
    \label{Eqn:TProp_beta_max_definition}
    \beta(\mathbf{x}) := \max\Big[0,
    \vartheta (\mathbf{x}) 
    - \Phi_{\mathrm{max}}\Big]
\end{align}
To prove the maximum principle, it suffices to show that 
\begin{align}
\label{Eqn:_TProp_beta_max_zero}
\beta(\mathbf{x}) = 0 \quad \forall \mathbf{x} \in \overline{\Omega}
\end{align}

We proceed by noting the following properties that $\beta(\mathbf{x})$ satisfies: 
\begin{align}
    \label{Eqn:TProp_beta_max_property_1}
    &\beta(\mathbf{x}) \geq 0 \quad \forall \mathbf{x}\in \overline {\Omega} \\
    \label{Eqn:TProp_beta_max_property_2}
    &\beta(\mathbf{x}) = 0 \quad \forall \mathbf{x}\in \Gamma^{\vartheta} \\
    \label{Eqn:TProp_beta_max_property_3}
    &\beta(\mathbf{x}) = 0 \quad \mbox{at the inlet (i.e., $s=0$ on $\Sigma$)}
\end{align}
Also, at any spatial point, we have 
\begin{align}
    \label{Eqn:TProp_beta_max_property_4}
    \mathrm{either} \; \beta(\mathbf{x}) = 0 \; \; \mathrm{or} \; \;      \vartheta(\mathbf{x}) = \beta(\mathbf{x})+\Phi_{\max}
\end{align}

Many steps below will be similar to those under the minimum principle, but accounting for the differences (cf. Eqs.~\eqref{Eqn:TProp_Phi_min} and \eqref{Eqn:TProp_Phi_max}, \eqref{Eqn:_TProp_beta_definition} and \eqref{Eqn:TProp_beta_max_definition}, and \eqref{Eqn:TProp_beta_property_1} and \eqref{Eqn:TProp_beta_max_property_1}). The second property implies that $\beta(\mathbf{x}) \in \mathcal{W}$. Taking $w(\mathbf{x}) = \beta(\mathbf{x})$ in Eq.~\eqref{Eqn:TProp_Galerkin_formulation}, we write 
\begin{align}
    \label{Eqn:TProp_MaxP_step1}
    &\int_{\Omega} d \, \mathrm{grad}[\beta(\mathbf{x})] \bullet \mathbf{K}(\mathbf{x},\vartheta(\mathbf{x})) \, \mathrm{grad}[\vartheta(\mathbf{x})] \,  \mathrm{d} \Omega 
    +\int_{\Omega} h_T\,\beta(\mathbf{x})\,(\vartheta(\mathbf{x})-\vartheta_\mathrm{amb})\,  \mathrm{d}\Omega 
    \nonumber \\ 
    &\hspace{1in} + \int_{\Omega} \varepsilon \, \sigma 
    \, \beta(\mathbf{x}) \, \big(\vartheta^{4}(\mathbf{x}) - \vartheta_{\mathrm{amb}}^4 \big) \, \mathrm{d} \Omega 
    +\int_{\Sigma} \chi\, \beta(\mathbf{x})\;\mathrm{grad}[\vartheta(\mathbf{x})]\bullet \widehat{\mathbf{t}}(\mathbf{x})\, \mathrm{d}\Gamma \nonumber \\
    &\hspace{1in} = \int_{\Omega} \beta(\mathbf{x})\;f(\mathbf{x})\,\mathrm{d}\Omega - 
    \int_{\Gamma^q} \beta(\mathbf{x})\; q^\mathrm{p}(\mathbf{x})\, \mathrm{d}\Gamma
\end{align}
Since $f(\mathbf{x}) \leq 0$, $q^{\mathrm{p}}(\mathbf{x}) \geq 0$, the non-negativity of $\beta(\mathbf{x})$ (i.e., Eq.~\eqref{Eqn:TProp_beta_max_property_1}) implies the two integrals on the right side of Eq.~\eqref{Eqn:TProp_MaxP_step1} are non-positive. We thus have the following inequality:
\begin{align}
    \label{Eqn:TProp_MaxP_step2}
    &\int_{\Omega} d \, \mathrm{grad}[\beta(\mathbf{x})] \bullet \mathbf{K}(\mathbf{x},\vartheta(\mathbf{x})) \, \mathrm{grad}[\vartheta(\mathbf{x})] \,  \mathrm{d} \Omega 
    +\int_{\Omega} h_T\,\beta(\mathbf{x})\,(\vartheta(\mathbf{x})-\vartheta_\mathrm{amb})\,  \mathrm{d}\Omega 
    \nonumber \\ 
    &\hspace{1in} + \int_{\Omega} \varepsilon \, \sigma 
    \, \beta(\mathbf{x}) \, \big(\vartheta^{4}(\mathbf{x}) - \vartheta_{\mathrm{amb}}^4 \big) \, \mathrm{d} \Omega 
    +\int_{\Sigma} \chi\, \beta(\mathbf{x})\;\mathrm{grad}[\vartheta(\mathbf{x})]\bullet \widehat{\mathbf{t}}(\mathbf{x})\, \mathrm{d}\Gamma 
   \leq 0
\end{align}
In view of the fourth property ~\eqref{Eqn:TProp_beta_max_property_4}, we replace $\vartheta(\mathbf{x})$ with $\beta(\mathbf{x}) + \Phi_{\max}$ in Eq.~\eqref{Eqn:TProp_MaxP_step2} and arrive at the following inequality:
\begin{align}
    \label{Eqn:TProp_MaxP_step3}
    &\int_{\Omega} d \, \mathrm{grad}[\beta(\mathbf{x})] \bullet \mathbf{K}(\mathbf{x},\vartheta(\mathbf{x})) \, \mathrm{grad}[\beta(\mathbf{x})] \,  \mathrm{d} \Omega + 
    \int_{\Omega} h_T\,\beta(\mathbf{x})\,(\beta(\mathbf{x}) + \Phi_{\mathrm{max}}-\vartheta_\mathrm{amb})\,  \mathrm{d}\Omega \nonumber \\
    &\qquad + \int_{\Omega} \varepsilon \, \sigma 
    \, \beta(\mathbf{x}) \, \big(\beta(\mathbf{x}) + \Phi_{\max} 
    - \vartheta_{\mathrm{amb}} \big)
    \big(\vartheta(\mathbf{x}) + \vartheta_{\mathrm{amb}} \big)
    \big(\vartheta^{2}(\mathbf{x}) + \vartheta_{\mathrm{amb}}^2 \big) \, \mathrm{d} \Omega 
    \nonumber \\
    &\qquad \qquad +\int_{\Sigma} \chi\, \beta(\mathbf{x})\;\mathrm{grad}[\beta(\mathbf{x})]\bullet \widehat{\mathbf{t}}(\mathbf{x})\, \mathrm{d}\Gamma 
    \leq 0  
\end{align}
Definition \eqref{Eqn:TProp_Phi_max} implies  $(\Phi_{\mathrm{max}}-\vartheta_{\mathrm{amb}}) \geq 0$, which further implies the product $\beta(\mathbf{x}) \; (\Phi_{\mathrm{max}}-\vartheta_{\mathrm{amb}})$ is positive. Using this result on the second and third integrals in  Eq.~\eqref{Eqn:TProp_MaxP_step3}, we realize the following inequality:
\begin{align}
    \label{Eqn:TProp_MaxP_step4}
    &\int_{\Omega} d \, \mathrm{grad}[\beta(\mathbf{x})] \bullet \mathbf{K}(\mathbf{x},\vartheta(\mathbf{x})) \, \mathrm{grad}[\beta(\mathbf{x})] \,  \mathrm{d} \Omega + 
    \int_{\Omega} h_T\,\beta^2(\mathbf{x}) \,  \mathrm{d}\Omega \nonumber \\
    &\qquad + \int_{\Omega} \varepsilon \, \sigma 
    \, \beta^2(\mathbf{x}) \, 
    \big(\vartheta(\mathbf{x}) + \vartheta_{\mathrm{amb}} \big)
    \big(\vartheta^{2}(\mathbf{x}) + \vartheta_{\mathrm{amb}}^2 \big) \, \mathrm{d} \Omega 
     +\int_{\Sigma} \chi\, \beta(\mathbf{x})\;\mathrm{grad}[\beta(\mathbf{x})]\bullet \widehat{\mathbf{t}}(\mathbf{x})\, \mathrm{d}\Gamma 
    \leq 0  
\end{align}
Since $\vartheta(\mathbf{x})$ and $\vartheta_{\mathrm{amb}}$ are non-negative, the third integral is non-negative. We thus have
\begin{align}
    \label{Eqn:TProp_MaxP_step5}
    \int_{\Omega} d \, \mathrm{grad}[\beta(\mathbf{x})] \bullet \mathbf{K}(\mathbf{x},\vartheta(\mathbf{x})) \, \mathrm{grad}[\beta(\mathbf{x})] \,  \mathrm{d} \Omega  
    &+\int_{\Omega} h_T\,\beta(\mathbf{x})^2 \,  \mathrm{d}\Omega
    \nonumber \\
   &\quad +\int_{\Sigma} \chi\, \beta(\mathbf{x})\;\mathrm{grad}[\beta(\mathbf{x})]\bullet \widehat{\mathbf{t}}(\mathbf{x})\, \mathrm{d}\Gamma 
    \leq 0  
\end{align}
Invoking the uniform ellipticity of the conductivity tensor (i.e., Eq.~\eqref{Eqn:TProp_thermal_conductivity_positive_definite}), integrating the third expression in Eq.~\eqref{Eqn:TProp_MaxP_step5}, and simplifying the resulting expression using the third property ~\eqref{Eqn:TProp_beta_max_property_3}, we get:
\begin{align}
    \label{Eqn:TProp_MaxP_step6}
    \int_{\Omega} d \, \mathrm{grad}[\beta(\mathbf{x})] \bullet k_1 \, \mathrm{grad}[\beta(\mathbf{x})] \,  \mathrm{d} \Omega + 
    &\int_{\Omega} h_T\,\beta^{2}(\mathbf{x}) 
    \, \mathrm{d}\Omega +
    \frac{\chi}{2} \, \beta^{2}(\mathbf{x})\Big\vert_{\mathrm{outlet}}
    \leq 0 
\end{align}
Similar to the case in the minimum principle, all the terms on the left side of the above inequality are non-negative. Hence, each term must be zero, implying that 
\begin{align}
    \label{Eqn:TProp_MaxP_step7}
    \beta(\mathbf{x}) = 0 \quad \forall \mathbf{x} \in \overline{\Omega}
\end{align}
Thus, the maximum principle is established.
\end{proof}

The two theorems presented above show that the response to the initial question (Q1) is in the affirmative---the minimum and maximum principles hold even with the chosen temperature-dependent material properties under a steady-state response.

\begin{remark}
For the proof of minimum and maximum principles, if the radiation is absent (i.e., $\varepsilon = 0$), then the ``non-negative'' requirement for the solution field can be dropped from the hypothesis of Theorems \ref{Thm:TProp_Minimum_principle} and \ref{Thm:TProp_Maximum_principle}. 
\end{remark}

\section{REPRESENTATIVE TEST PROBLEMS}
\label{Sec:S5_TProp_NR}
We now outline canonical problems, guided by previous active-cooling experiments (e.g., \citep{devi2021microvascular,Nakshatrala_PNAS_Nexus_2023}), that will be used to show how the temperature-dependent material properties affect the solution fields, quantitatively and qualitatively.

Consider a square computational domain of size $100 \, \mathrm{mm} \times 100 \, \mathrm{mm}$ with thickness $d = 5$ mm. The entire lateral boundary is adiabatic (i.e., $q^{\mathrm{p}}(\mathbf{x},t) = 0$ and $\Gamma^{q} = \partial \Omega$), a source supplies heat to the bottom surface, and the top surface is free to convect and radiate. The domain consists of an embedded vasculature. A fluid flows through the vasculature, starting from the inlet and exiting the domain at the outlet. To discern the effect of temperature-dependent material properties on thermal response, we have considered
\begin{enumerate}[(i)]
\item three distinct vasculature layouts (i.e., U-shaped, serpentine, and asymmetric), as shown in \textbf{Fig.~\ref{Fig4:TProp_Problem_description}},
\item three materials---CFRP, GFRP, and epoxy---to encompass a diverse thermal conductivity range, 
\item  two heat fluxes applied separately: $1000$ and $2000 \; \mathrm{W/m^2}$, and 
\item a representative flow rate of $Q = 1 \; \mathrm{mL/min}$.
\end{enumerate}

\begin{figure}[h]
    \centering
    \includegraphics[scale=0.4]{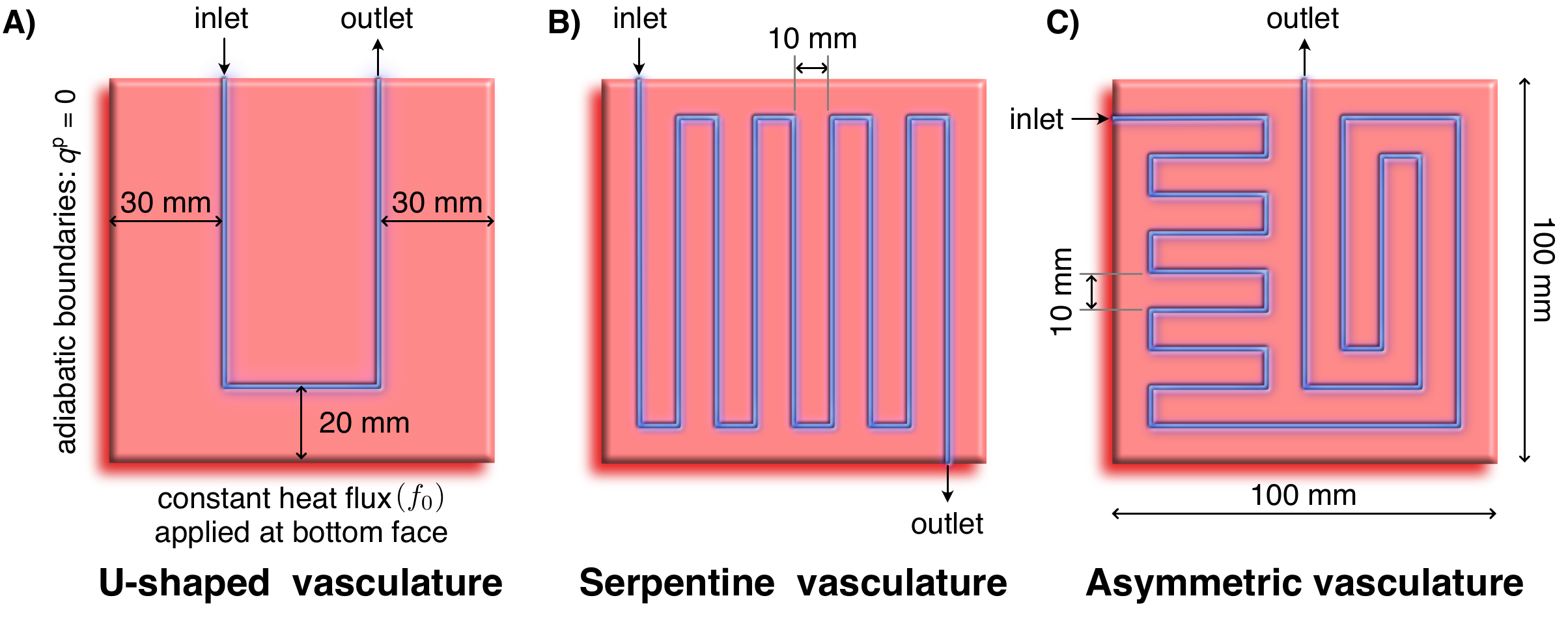}
    \caption{\textsf{Problem description.} The figure shows three different vascular layouts used in the study: \textbf{A)} U-shaped, \textbf{B)} serpentine, and \textbf{C)} asymmetric. Blue lines represent channels, which begin at the inlet and end at the outlet. In all cases, the computational domain is a square of size $100 \times 100 \, \mathrm{mm}$ with thickness $d = 5 \, \mathrm{mm}$. Active cooling is achieved by flowing fluid into the vasculature at the inlet with fluid's temperature equal to the ambient (i.e., $\vartheta_{\mathrm{inlet}} = \vartheta_{\mathrm{amb}}$). At the bottom face, a source supplies a uniform heat flux (i.e., $f(\mathbf{x}) = f_0$). The top surface is free to exchange heat via convection and radiation. The lateral boundaries are thermally isolated (i.e., adiabatic). All the figures are drawn to the scale. 
    \label{Fig4:TProp_Problem_description}}
\end{figure}
In all our numerical simulations, we have used the single-field Galerkin weak formulation, the finite element method for spatial discretization, and the backward difference formula (BDF) with the options of maximum order five and minimum order one. We have implemented the weak formulation using the ``weak form PDE'' utility available in \citet{COMSOL} and used three-node triangular elements with second-order Lagrange shape functions. Table \ref{Table:TProp_param_values} provides the values employed in the numerical simulations.\footnote{Our selection of the value for the total time $\mathcal{T}$, provided in Table \ref{Table:TProp_param_values}, is based on the active-cooling experiments reported in \citep{devi2021microvascular,Nakshatrala_PNAS_Nexus_2023}. By $\mathcal{T} = 1500 \, \mathrm{s}$ in those experiments, the evolution of the temperature field flattened, indicating that the system had been closer to the steady state.}

\begin{table}[ht]
    \centering 
    \caption{Parameters used in numerical simulations.} 
    \begin{tabular}{||c|c||}
        \hline
        \textbf{Parameter} & \textbf{Value} \\ [0.5ex] 
        \hline\hline
        Fluid (coolant) & Water  
        \\ \hline 
        Fluid's density ($\rho_f$) & 1000 $\mathrm{kg/m^3}$ 
        \\ \hline
        Fluid's specific heat capacity ($c_f$) & 4183 $\mathrm{J/(kg \cdot K)}$ 
        \\ \hline 
        Volumetric flow rate ($Q$) & 1 $\mathrm{mL/min}$ 
        \\ \hline \hline 
        Host material & CFRP, GFRP, or epoxy 
        \\ \hline 
        Specific heat capacity ($c_s$) & Figure \ref{Fig2:TProp_Variation_of_host_properties}A
        \\ \hline 
        Thermal conductivity ($k_{s}$) & Figure \ref{Fig2:TProp_Variation_of_host_properties}B
        \\ \hline \hline 
        Heat transfer coefficient ($h_T$) & 21 $\mathrm{W/m^2}$  \citep{devi2021microvascular}
        \\ \hline
        Emissivity coefficient ($\varepsilon$) & 0.97   
        \\ \hline
        Stefan-Boltzmann constant ($\sigma$) & $5.67 \times 10^{-8} \; \mathrm{W/(m^3 \cdot K^4)}$   
        \\ \hline
        Applied heat flux ($f_0$) & 1000 or 2000 
        $\mathrm{W/m^2}$ 
        \\ \hline 
        Ambient temperature ($\vartheta_{\mathrm{amb}}$) & 296.42 $\mathrm{K}$ \\
        \hline \hline 
        Time-stepping scheme & backward difference formula (BDF) \\
        \hline
        Time step ($\Delta t$) & 1 sec \\ 
        \hline
        Total time ($\mathcal{T}$) & 1500 secs \\
        \hline
    \end{tabular}
    \label{Table:TProp_param_values} 
\end{table}

\pagebreak
\section{EFFECT ON THERMAL INVARIANTS UNDER FLOW REVERSAL}
\label{Sec:S6_TProp_Flow_reversal}
Recently, \citet{Nakshatrala_PNAS_Nexus_2023} have shown that the mean surface temperature (MST) and thermal efficiency ($\eta$) remain invariant under flow reversal (i.e., swapping inlet and outlet). However, their analysis assumes material properties are independent of temperature. This section examines how the temperature-dependent material properties affect these two invariants.

For the temperature field $\vartheta(\mathbf{x},t)$, the MST is defined as follows: 
\begin{align}
    \label{Eqn:TProp_MST_definition}
    \vartheta_{\mathrm{MST}}(t) = \frac{1}{\mathrm{meas}(\Omega)} \int_{\Omega} \vartheta(\mathbf{x},t) \, \mathrm{d} \Omega 
\end{align}
in which $\mathrm{meas}(\Omega)$ denotes the set measure of $\Omega$ (i.e., the area of the domain). The thermal efficiency for active cooling applications is defined as the ratio of the rate of heat extracted by the coolant to the total heat supplied by the heat source, for example, see \citep{nakshatrala2023ROM}. Mathematically, 
\begin{align}
    \label{Eqn:TProp_Efficiency_definition}
    \eta := \frac{\mbox{rate of heat extracted by the coolant}}{\mbox{rate of heat supplied by the heater}}
\end{align}
where $\eta$ denotes the thermal efficiency. If the lateral boundaries are adiabatic, the efficiency takes the following form: 
\begin{align}
    \label{Eqn:TProp_Efficiency_Adiabatic_boundaries}
    \eta(t)
    = \left(\int_{\Omega} f(\mathbf{x},t) \, \mathrm{d} \Omega \right)^{-1} \chi \, \left(\vartheta_{\mathrm{outlet}}(t) - \vartheta_{\mathrm{inlet}}\right)
\end{align}
where $f(\mathbf{x},t)$ denotes the applied heat source, $\chi$ is the heat capacity rate, and $\vartheta_{\mathrm{outlet}}(t)$ and $\vartheta_{\mathrm{inlet}}$ depict the outlet and inlet temperatures, respectively. Note that the outlet temperature varies with time, and so does the efficiency. When the applied heat is constant, $f(\mathbf{x},t) = f_0$, the efficiency takes the following simplified form: 
\begin{align}
    \label{Eqn:TProp_Efficiency_special_case}
    \eta(t)
    = \frac{\chi \, \left(\vartheta_{\mathrm{outlet}}(t) - \vartheta_{\mathrm{inlet}}\right)}{\mathrm{meas}(\Omega) \, f_0}
\end{align}
The above expression reveals that for a constant inlet temperature, the efficiency is proportional to the outlet temperature, implying that if the efficiency is invariant, so is the outlet temperature.

Using the above notation, the two invariants under flow reversal take the following mathematical form: 
\begin{itemize}
    \item Invariant \#1 --- the mean surface temperature: 
    \begin{align}
        \vartheta^{(f)}_{\mathrm{MST}}(t) = 
        \vartheta^{(r)}_{\mathrm{MST}}(t) \; \forall t
    \end{align}
 
    \item Invariant \#2 --- the outlet temperature: 
    \begin{align}
        \vartheta^{(f)}_{\mathrm{outlet}}(t) = \vartheta^{(r)}_{\mathrm{outlet}}(t)\; \forall t
    \end{align}
\end{itemize}
In the above equations, the superscripts ``$(f)$" and ``$(r)$" represent the forward and reverse flows, respectively. 

These invariants were mathematically derived assuming constant material properties \citep{Nakshatrala_PNAS_Nexus_2023}. Below, we investigate how these invariants under flow reversal are affected when temperature-dependent material properties are accounted for.

\subsection{Numerical verification}
\textbf{Figure~\ref{Fig5:TProp_Flow_reversal_MST_vs_time}} shows how the mean surface temperatures vary over time for three vascular layouts. Three key observations are: i) The MST remains invariant under flow reversal even when the material properties depend on the temperature. ii) There is no visible disparity in MST between the constant material properties (CMP) versus the temperature-dependent material properties (TDMP) cases for all three material systems under a lower heat flux (i.e., $1000 \; \mathrm{W/m^2}$). iii) Variations in the MST between the CMP and TDMP cases are evident, particularly noticeable for GFRP in the transient regime when subjected to a higher heat flux of $2000 \; \mathrm{W/m^2}$. These distinctions become negligible when the system approaches a steady state. 

\textbf{Figure~\ref{Fig6:TProp_Outlet_temperature_vs_time}} shows the temporal evolution of outlet temperatures. When operating at a lower heat flux ($1000 \; \mathrm{W/m^2}$), the disparities in outlet temperature between the CMP and TDMP cases are insignificant. In contrast, when exposed to a double heat flux level ($2000 \; \mathrm{W/m^2}$), there are comparatively more pronounced differences, particularly during the middle time period, especially for the GFRP. Nevertheless, as the system approaches a steady state, these temperature deviations gradually diminish.

The above results answer the second question (Q2). In the chosen temperature range, the mean surface temperature and the outlet temperature remain invariant under flow reversal even when the material properties depend on the temperature, similar to the previously shown result under constant material properties.

\begin{figure}[h]
    \centering
    \includegraphics[scale=0.26]{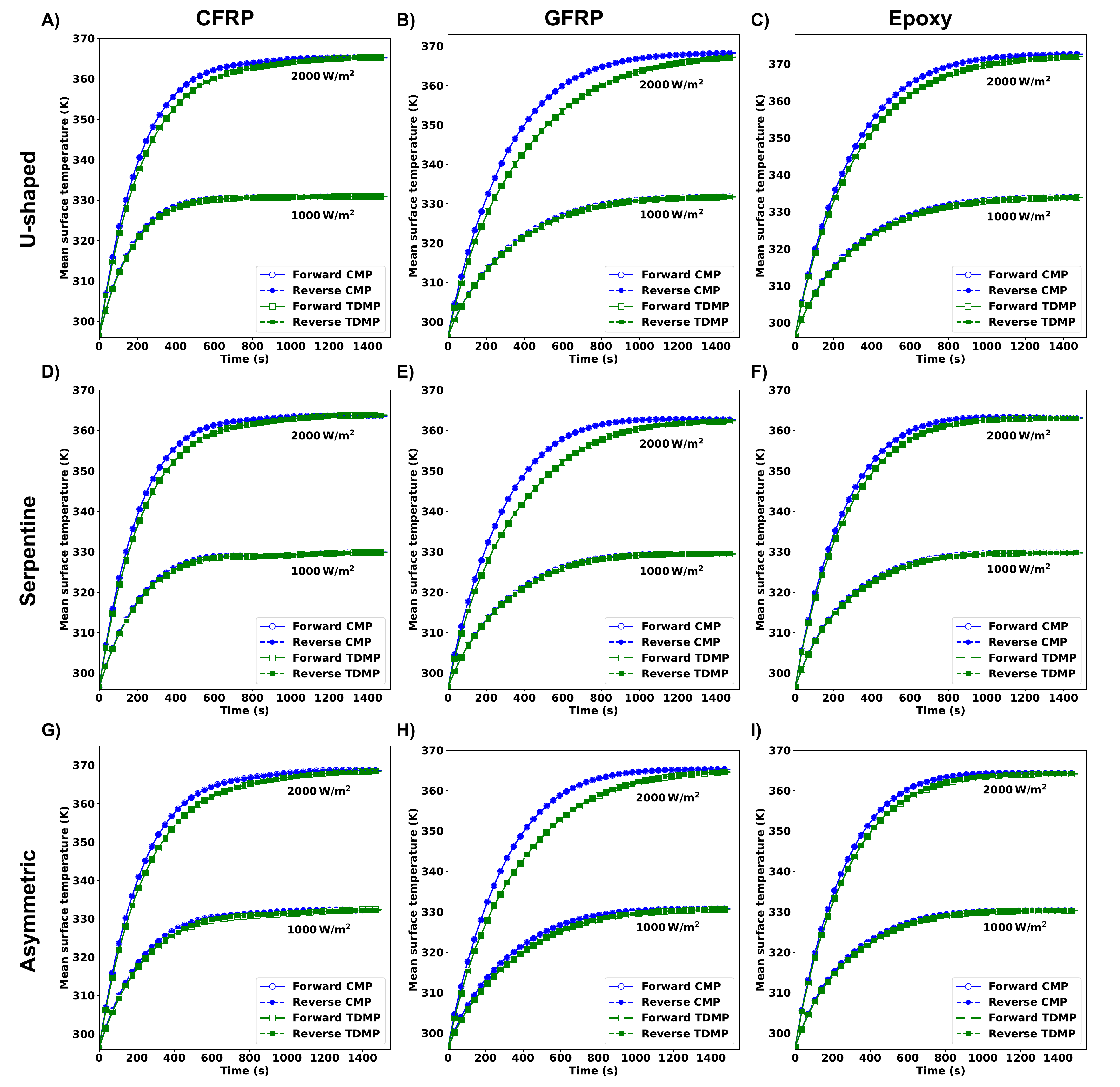}
    \caption{\textsf{Flow reversal --- Mean surface temperature}.
    This figure shows the mean surface temperature over time under forward and reverse flow conditions and for CMP and TDMP cases. We have considered all three vasculature layouts and two applied heat fluxes: 1000 and 2000  $\mathrm{W/m^2}$. From the reported results, we draw two inferences. First, the mean surface temperature (i.e., the first thermal invariant) exhibits negligible change under flow reversal, even when the material properties depend on temperature. Second, minor differences exist between the mean surface temperatures under TDMP and CMP cases, comparatively more in GFRP, in the temperature range considered in this paper. Ultimately, the variation diminishes as the system approaches steady-state. \label{Fig5:TProp_Flow_reversal_MST_vs_time}}
\end{figure}

\begin{figure}[h]
    \centering
    \includegraphics[scale=0.26]{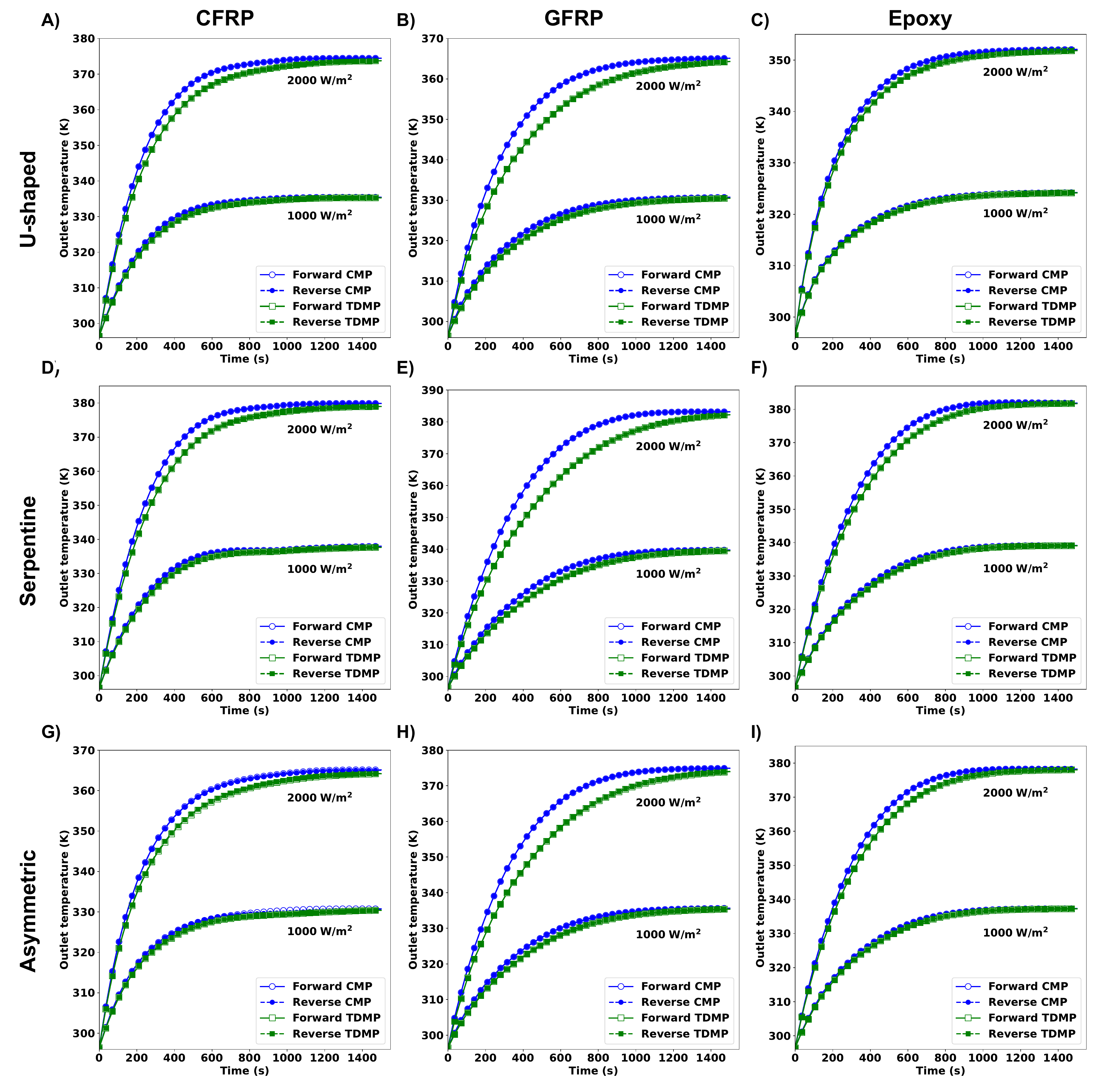}
    \caption{\textsf{Flow reversal --- Outlet temperature}. This figure shows the outlet temperature over time under forward and reverse flow conditions and for CMP and TDMP cases. We have considered all three vasculature layouts and two applied heat fluxes: 1000 and 2000  $\mathrm{W/m^2}$. From the reported results, we draw two inferences. First, the outlet temperature (i.e., the second thermal invariant) exhibits negligible change under flow reversal, even when the material properties depend on temperature. Second, disparities can be observed in the transient state between the outlet temperatures in TDMP and CMP scenarios, particularly in the case of GFRP. However, these differences tend to decrease as the system approaches a steady state.
    \label{Fig6:TProp_Outlet_temperature_vs_time}}
\end{figure}

\section{QUANTITATIVE COMPARISONS}
\label{Sec:S7_TProp_Quantitative}

\textbf{Figure~\ref{Fig7:TProp_Temp_profile}} collates temperature and heat flux vector profiles for the CMP (measured at $23 \; ^\circ \mathrm{C}$) and TDMP cases. For all three vasculature layouts, the corresponding profiles do not exhibit notable differences, either qualitatively or quantitatively. 

\begin{figure}[h]
    \centering
    \includegraphics[scale=0.95]{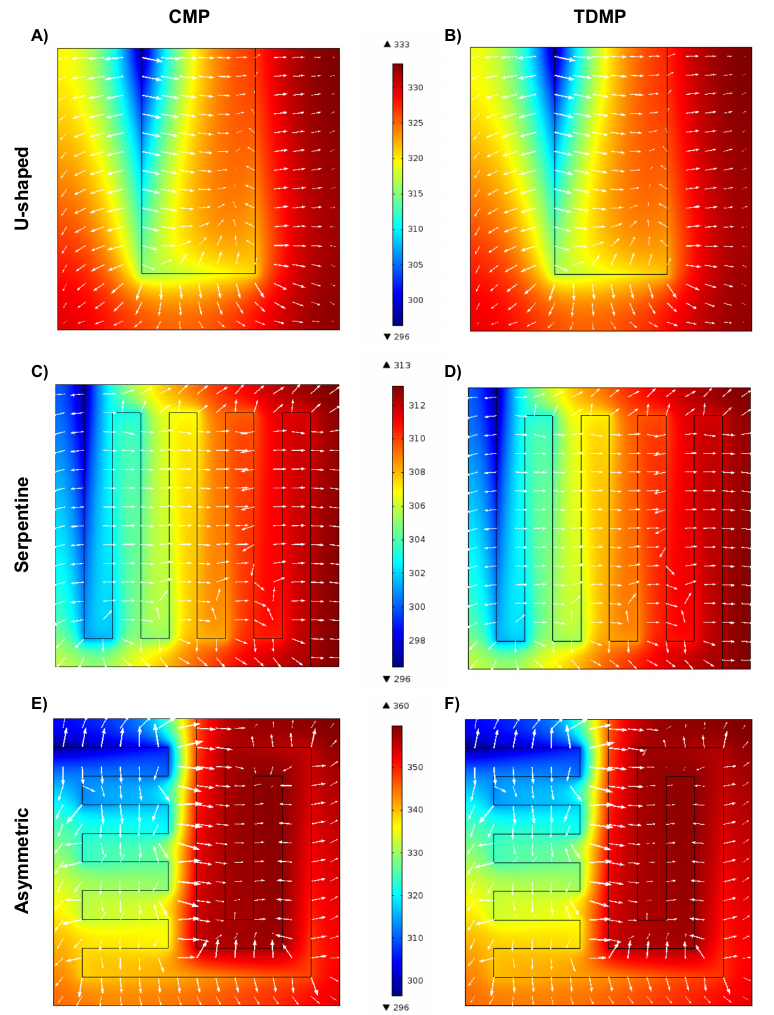}
    \caption{\textsf{Temperature contours and heat flux vector}: This figure shows the temperature field and the heat flux vectors for the three vascular layouts; the white arrows represent the heat flux vectors. The first column collates the results under CMP, while the results in the second column are for TDMP. There are no appreciable differences in the solution fields between the CMP and TDMP cases.
    \label{Fig7:TProp_Temp_profile}}
\end{figure}

\textbf{Figures~\ref{Fig8:U_shape_thermal_efficiency}} to \textbf{\ref{Fig10:Asymmetry_thermal_efficiency}} depict the time-dependent changes in thermal efficiencies for the three vascular configurations. These visual representations also zoom on the central segments to highlight the magnitude of fluctuations. It is evident from these graphs that, during the transient phase, there are slight distinctions between the CMP and TDMP scenarios. But, as time progresses towards a steady state, the disparities between the two scenarios decrease, a pattern observed across all three materials.

\begin{figure}[h!]
    \centering
    \includegraphics[scale=0.28]{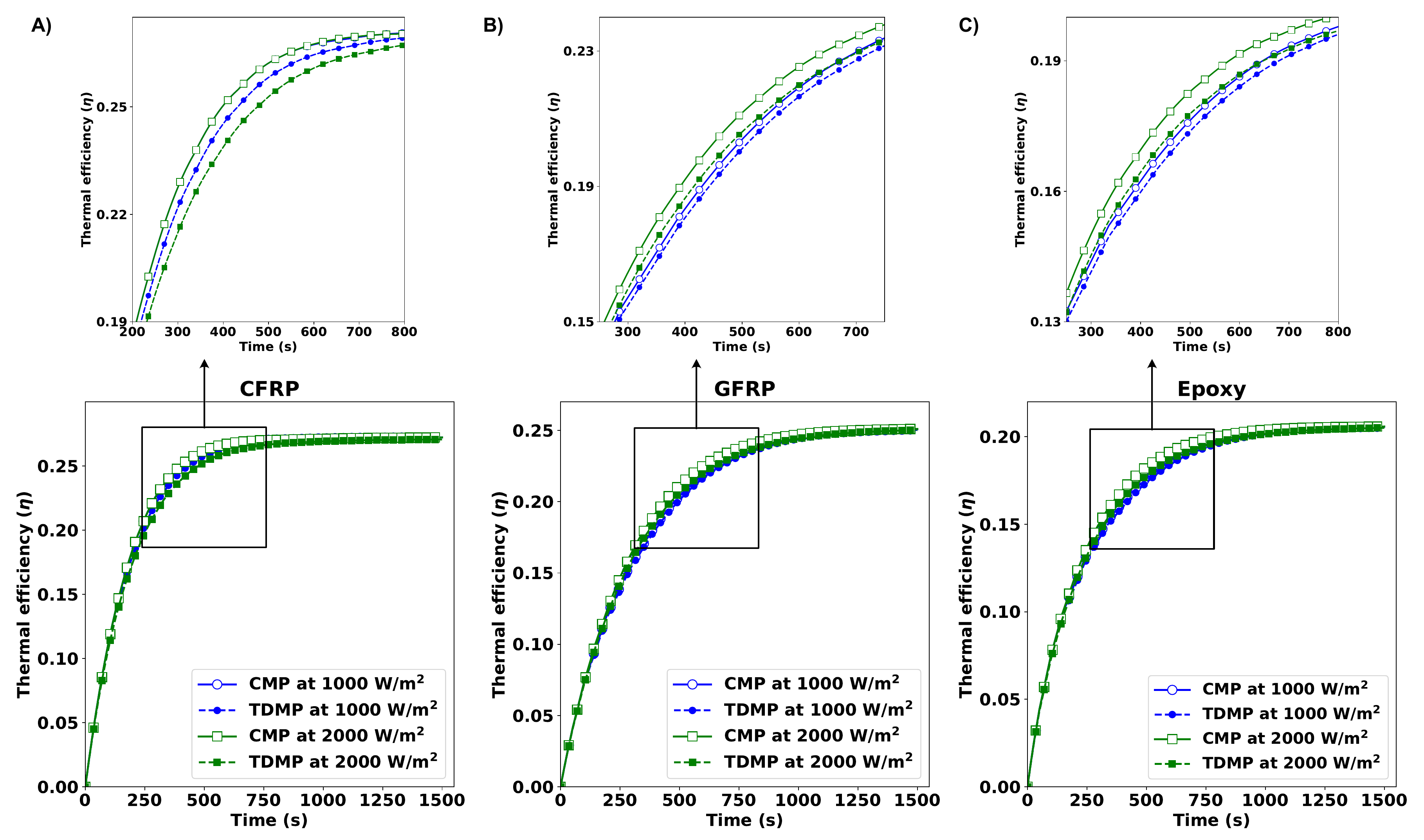}
    \caption{\textsf{Thermal efficiency --- U-shaped vasculature.}  For all three materials in the transient regime, the thermal efficiency exhibits minor disparities between constant material properties (CMP) and temperature-dependent properties (TDMP). However, this disparity in thermal efficiency diminishes as the system approaches a steady state. This slight deviation in the intermediate phase is magnified to observe the nature of the discrepancy more closely. In each scenario, the variation remains minimal. Also, the thermal efficiency remains nearly indistinguishable for both heat fluxes $1000 \; \mathrm{W/m^2}$ and $2000 \; \mathrm{W/m^2}$. \label{Fig8:U_shape_thermal_efficiency}}
\end{figure}

\begin{figure}[h]
    \centering
    \includegraphics[scale=0.28]{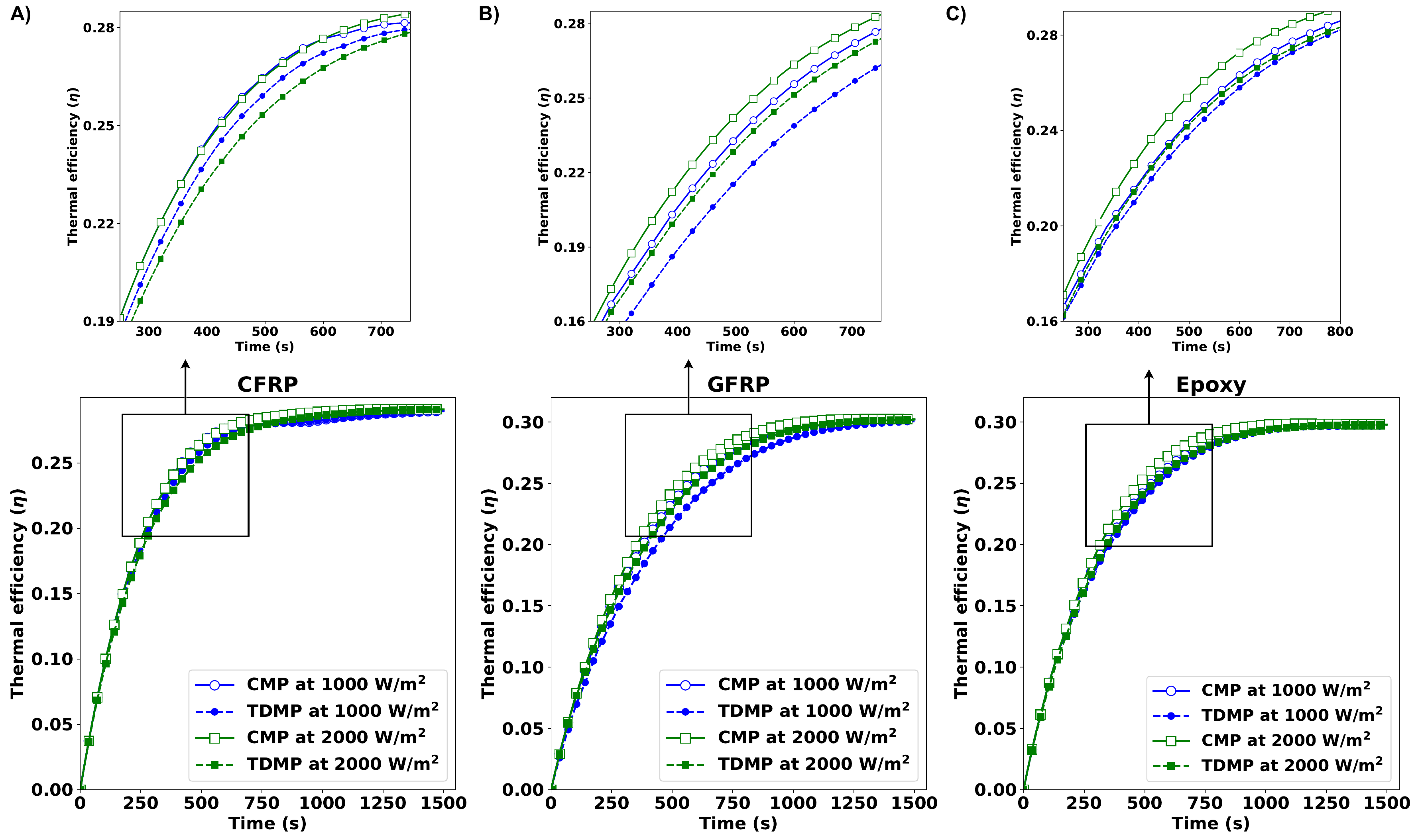}
    \caption{\textsf{Thermal efficiency --- Serpentine vasculature}. This figure compares the thermal efficiency obtained under constant \emph{vs.} temperature-dependent material properties for three materials: \textbf{A)} CFRP, \textbf{B)} GFRP, and \textbf{C)} epoxy. Two different heat fluxes are considered: 1000 $\mathrm{W/m^2}$ and 2000 $\mathrm{W/m^2}$. The top panel shows a zoomed view of the temperature evolution in the transient regime. The thermal efficiency of the materials exhibits a minor disparity in the transient regime. However, the difference fades away as the system approaches a steady state over time. \label{Fig9:Serpentine_thermal_efficiency}}
\end{figure}

\begin{figure}[!h]
    \centering
    \includegraphics[scale=0.28]{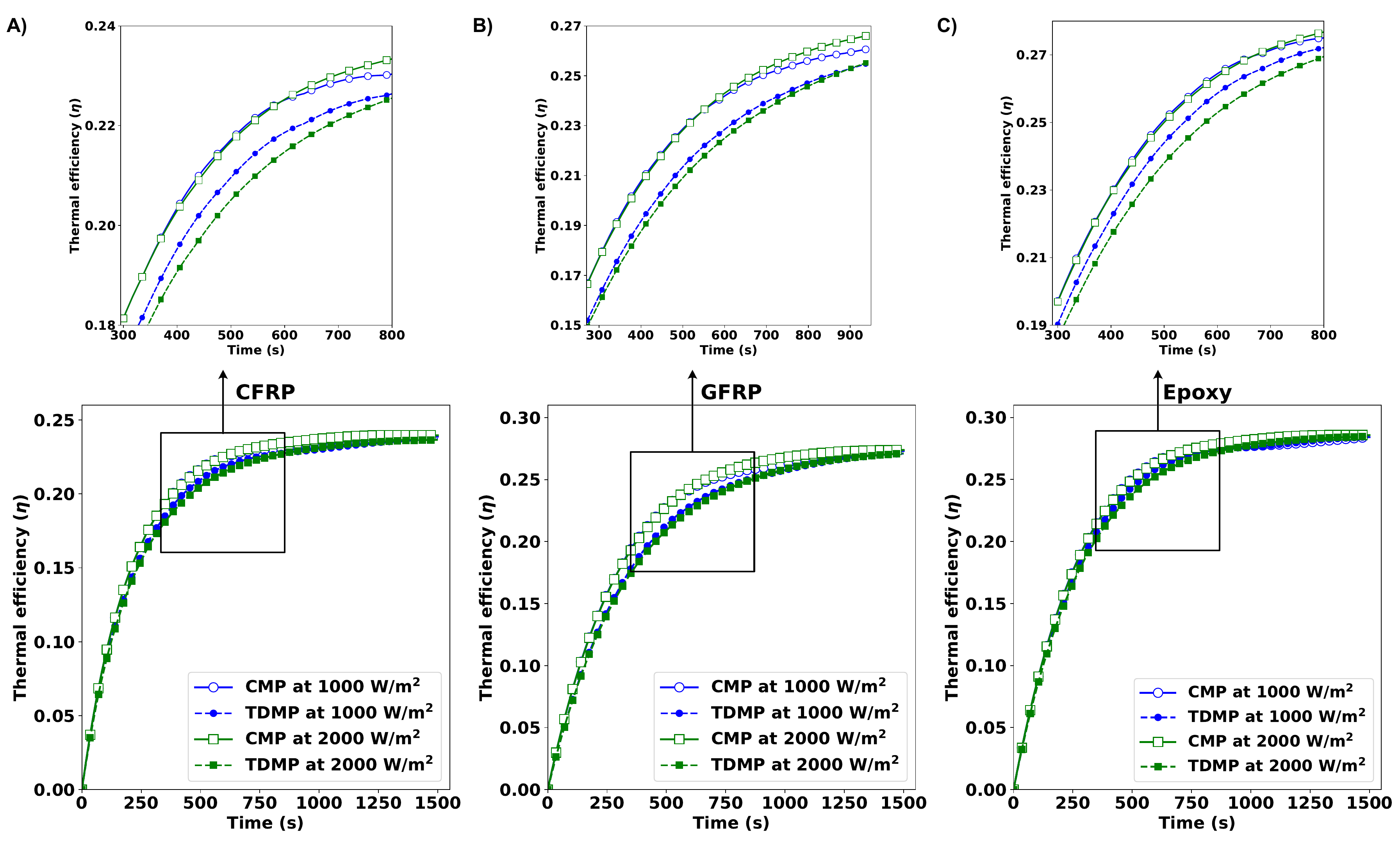}
    \caption{\textsf{Thermal efficiency --- Asymmetric vasculature.} The thermal efficiency of the materials exhibits more prominent disparity in the transient regime as compared to U-shaped and serpentine vasculatures. However, as the system approaches a steady state over time, the efficiencies converge to a uniform value. The deviation in the intermediate phase is magnified to observe the nature of the discrepancy more closely. In each scenario, the variations are negligible. Also, the thermal efficiency remains nearly indistinguishable for both heat fluxes $1000 \; \mathrm{W/m^2}$ and $2000 \; \mathrm{W/m^2}$.\label{Fig10:Asymmetry_thermal_efficiency}}
\end{figure}
\textbf{Figure~\ref{Fig11:Arclength_temperature}} plots the temperature along the arc-length of the U-shaped vasculature. For all three materials (CFRP, GFRP, and epoxy) and both heat fluxes $1000 \; \mathrm{W/m^2}$ and $2000 \; \mathrm{W/m^2}$, the temperature profiles along the vasculature compare well under CMP and TDMP cases. However, temperature differences are relatively more pronounced for CFRP; nonetheless, these differences become almost negligible towards the outlet.

For the considered temperature range and for the realistic temperature-dependence of the material properties, there are no significant quantitative differences compared to the response under constant material properties---answering the third question (Q3). 
\begin{figure}[h]
    \centering
    \includegraphics[scale=0.35]{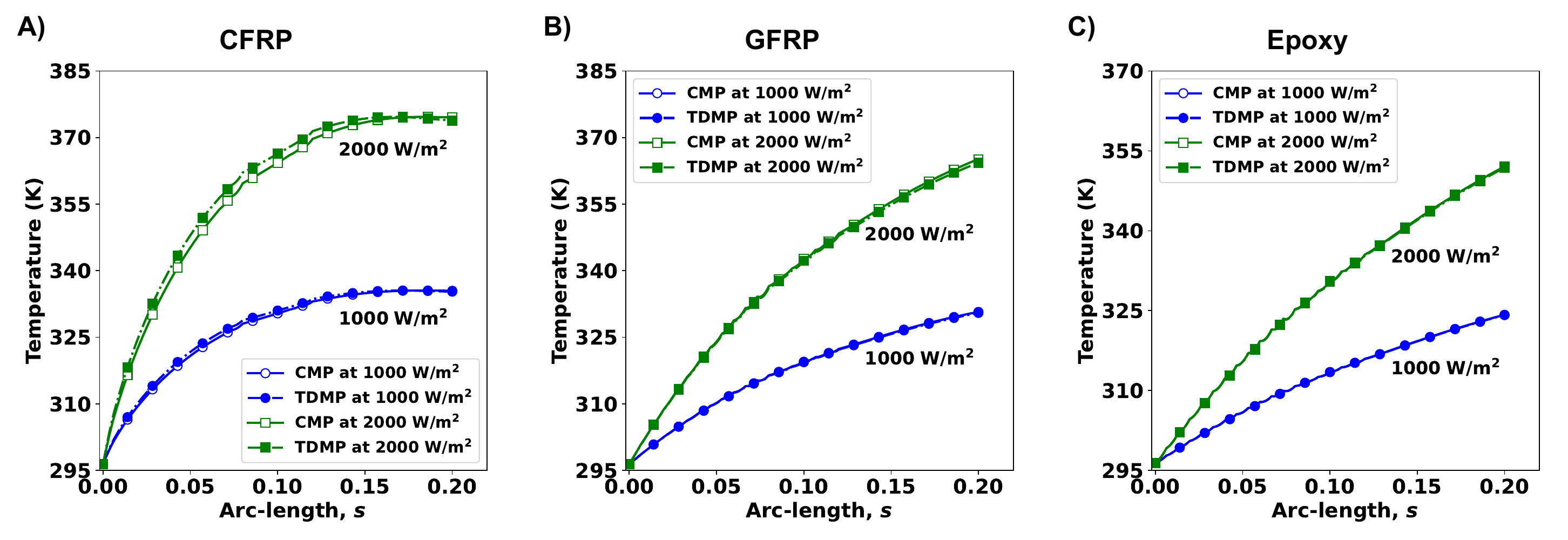}
    \caption{\textsf{Arc-length temperature.} The arc-length temperatures for the three materials are graphed considering U-shaped vasculature under two different heat fluxes: $1000$ and $2000 \; \mathrm{W/m^2}$. Arc-length is calculated as the distance measured from the inlet while moving along the vasculature. It is apparent from the charts that the influence of temperature-dependent material properties (TDMP) on arc-length temperature is not significant compared to the constant material properties (CMP) scenario. \label{Fig11:Arclength_temperature}}
\end{figure}

\section{CLOSURE}
\label{Sec:S8_TProp_Closure}
This paper considered three materials: carbon-fiber-reinforced polymer (CFRP), glass-fiber-reinforced polymer (GFRP), and epoxy. Experimental characterization has revealed that the mentioned materials' thermophysical properties (i.e., specific heat capacity and thermal conductivity) depend on the temperature. This study revealed the ramifications of the remarked dependence on vascular-based active cooling performance. 

The main conclusions are: 
\begin{enumerate}[(C1)]
    \item In the chosen temperature range 23--150 $^\circ\mathrm{C}$, the specific heat capacities of the three materials depend on the temperature, with epoxy showing stronger dependence compared to CFRP and GFRP. On the other hand, the thermal conductivity of CFRP increases mildly with temperature for the same range (approximately with a slope of 0.01), while the thermal conductivities of GFRP and epoxy do not vary appreciably with the temperature.
    \item For typical flow rates encountered in active cooling of microvascular composites (i.e., order of mL/min) and for the temperature range 23--100 $^\circ\mathrm{C}$, the heat capacity rate of water---the coolant---does not vary with temperature. 
    \item The solution field under the mathematical model accounting for temperature-dependent thermophysical properties also possesses minimum and maximum principles under steady state, mirroring the recent findings for constant material properties \citep{nakshatrala2023ROM}. However, such principles are yet to be explored for solutions in the transient regime, even when material properties are constant.
    \item The temperature dependence of material properties exhibited by the three materials does not affect the two invariants---mean surface temperature and outlet temperature---under flow reversal, extending the previously known results for constant material properties. Thus, these two invariants under flow reversal could still serve as a suitable measure for studying thermal regulation using such materials in the mentioned temperature range.
    \item The temperature-dependent material properties do not appreciably affect---either qualitatively or quantitatively---the active cooling performance. Specifically, this dependence has minimal effect on the global thermal characteristics such as the mean surface temperature and thermal efficiency.
\end{enumerate}

A plausible future work can be towards studying thermo-mechanical coupled response in active-cooling applications, conjointly with the dependence of material properties on temperature.  

\section*{DATA AVAILABILITY}
The article includes all the study data (temporal evolution of temperature, characterization techniques, methods, material parameters, extended investigation results, and modeling details). Any other data supporting this study's findings are available from the corresponding author (KBN) upon reasonable request.

\bibliographystyle{plainnat}
\bibliography{Master_References}
\end{document}